\newtheorem{theorem}{Theorem}
\newtheorem{proposition}{Proposition}
\newtheorem{definition}{Definition}
\newtheorem*{definition*}{Definition}
\newtheorem*{example*}{Example}
\newtheorem*{conditional*}{Conditional}
\newtheorem*{remark*}{Remark}
\tikzset{Myarrow/.style={very thin, arrows=-Latex}}
\definecolor{Gray}{gray}{0.85}
\definecolor{LightCyan}{rgb}{0.88,1,1}
\newcolumntype{a}{>{\columncolor{Gray}}c}
\newcolumntype{b}{>{\columncolor{Gray}}r}
\newcolumntype{d}{>{\columncolor{white}}c}
\newcommand{\scrL}{{\mathscr L}}
\providecommand{\keywords}[1]
{
  \small	
  \textbf{\textit{Keywords---}} #1
}
\title{
A Probabilistic Perspective 
on Feller, Pollard and the Complete Monotonicity 
of the Mittag-Leffler Function
}
\author{Nomvelo Karabo Sibisi \\{\small {\tt sbsnom005@myuct.ac.za}}}
\date{\today}
\begin{document}
\maketitle
\thispagestyle{empty}


\begin{abstract}
\noindent
The main contribution of this paper is the use  of probability theory to prove that
the three-parameter Mittag-Leffler 
function is the Laplace transform of a distribution   
and thus  completely monotone.
Pollard used contour integration 
to  prove the result in the one-parameter case. 
He  also cited personal communication by Feller of a 
discovery of the result by  ``methods of probability theory''.
Feller 
used the two-dimensional Laplace transform of a  bivariate distribution to derive the   result.
We 
pursue the theme of  probability theory
to explore complete monotonicity beyond the contribution  due to Feller.
Our approach involves an  interplay between  mixtures and convolutions of stable and gamma densities, 
together with a limit theorem  that leads to a novel integral representation of the 
three-parameter Mittag-Leffler function (also known as  the  Prabhakar function). 
\end{abstract}
\keywords{
Probabilistic  reasoning; complete monotonicity; 
stable \& gamma distributions; Mittag-Leffler function; Prabhakar function.  
}

\section{Introduction}
\label{sec:intro}

The  problem of  interest in this paper is the study of  
the complete monotonicity of the Mittag-Leffler function.
Complete monotonicity is an analytic property of functions.
Accordingly, Pollard~\cite{PollardML} used analytic methods to prove the property in the instance of the  Mittag-Leffler function.
Pollard   also cited personal communication by Feller of a discovery of the result by  ``methods of probability theory''.
However, Pollard's comment notwithstanding,
the published proof by Feller~\cite{Feller2}~(XIII.8) also may be regarded as more analytic  than probabilistic
(we discuss both approaches later in this section).
This prompted us to ask the following:  
\begin{enumerate}
\item What  might constitute  a  ``method of probability theory''  in proving an analytic property of a function,
at least in the context of proving that the Mittag-Leffler function is completely monotone?
\item What additional or complementary  insight, if any, might the method of probability theory offer 
relative to an analytic method? 
\end{enumerate}
The strategy of this paper  is simple --   assign appropriate probability distributions 
and use the sum and product rules of probability theory to explore analytic attributes of associated  functions.
Beyond  reproducing known analytic results due to Pollard and Feller, 
we discuss the  generalisation that  flows from  adopting such  reasoning.
We start with definitions of complete monotonicity and the  Mittag-Leffler function.


\subsection{Definitions}
\label{sec:definitions}

An infinitely differentiable function $\varphi(x)$ on $x>0$ is completely monotone  if its derivatives $\varphi^{(n)}(x)$ 
satisfy $(-1)^n\varphi^{(n)}(x)\ge0$, $n\ge 0$.
Bernstein's theorem 
states that $\varphi(x)$ is completely monotone iff it may be expressed as 
\begin{align}
\varphi(x) &= \int_0^\infty e^{-x t}\,dF(t) 
= \int_0^\infty e^{-x t} f(t)dt
\label{eq:LT} 
\end{align}
for  a non-decreasing distribution function $F(t)$ with density $f(t)$, {\it i.e.}\ $F(t)=\int_0^t f(u)du$.
The first integral in~(\ref{eq:LT}) is formally called the Laplace-Stieltjes transform of $F$ and  
the latter the (ordinary) Laplace transform  of  $f$.
For bounded $F(t)$, $\varphi(x)$ is defined on $x\ge0$.
Integrating~(\ref{eq:LT}) by parts in this case gives  $\varphi(x)$ in terms of 
the ordinary Laplace transform of $F$:
\begin{align}
   \varphi(x) &=  x \int_{0}^\infty e^{-xt} F(t)\,dt = \int_{0}^\infty e^{-t} F(t/x)\,dt
\end{align}
The (one-parameter) Mittag-Leffler function 
$E_\alpha(x)$  is defined by  the infinite series 
\begin{align}
E_\alpha(x) &= \sum_{k=0}^\infty \frac{x^k}{\Gamma(\alpha k+1)} \quad \alpha\ge0
\label{eq:ML}
\end{align}
For later reference, the Laplace transform of 
$E_\alpha(-\lambda x^\alpha)$ $(\lambda>0)$ is 
\begin{align}
\int_0^\infty e^{-sx} E_\alpha(-\lambda x^\alpha) \,dx  &= \frac{s^{\alpha-1}}{\lambda+s^\alpha} \qquad {\rm Re}(s)\ge0
\label{eq:LaplaceML}
\end{align}
We shall introduce the two and three-parameter  generalisations below. 
For now, we may  turn to the  problem of proving  the complete monotonicity of $E_\alpha(-x)$.
We discuss  the approaches due  to Pollard and Feller in turn before turning to our probabilistic  perspective.

\subsection{Pollard's Method}
\label{sec:Pollard}

In a 1948 paper, Pollard~\cite{PollardML}  led with the  opening remark:
\begin{quote}
``W.~Feller communicated to me his discovery -- by the methods of probability theory -- that if $0\le \alpha \le1$ 
the function $E_\alpha(-x)$ is completely monotonic for $x\ge0$. 
This means that it can be written in the form
\begin{align*}
E_\alpha(-x) &= \int_{0}^\infty e^{-xt} dP_\alpha(t) 
\end{align*}
where $P_\alpha(t)$ is nondecreasing and bounded.
In this note we shall prove this fact directly  and determine the function $P_\alpha(t)$ explicitly.'' \newline
 [we use $P_\alpha$ where Pollard used $F_\alpha$, which we reserve for another purpose]
\end{quote}
Having dispensed with  $E_0(-x)=1/(1+x)$ and $E_1(-x)=e^{-x}$ since ``there is nothing to be proved in these cases'',
Pollard used  a contour integral representation of $E_\alpha(-x)$:
\begin{align}
 E_\alpha(-x) &= \frac{1}{2\pi i}\oint_{C} \frac{s^{\alpha-1}e^s}{x+s^{\alpha}}\,ds  
                       = \frac{1}{2\pi i\alpha}\oint_{C^\prime} \frac{e^{z^{\frac{1}{\alpha}}}}{x+z}\,dz
\label{eq:MLcontour}
\end{align}
to prove that 
\begin{alignat}{3}
p_\alpha(t) &\equiv P_\alpha^{\,\prime}(t) = \frac{1}{\alpha}\, f_\alpha(t^{-1/\alpha})\, t^{-1/\alpha-1}  \qquad && 0<\alpha<1
\label{eq:Pollarddensity}
\intertext{where $f_\alpha(t)$ is defined by} 
e^{-s^\alpha}  &= \int_0^\infty e^{-s t} f_\alpha(t)\,dt  && 0<\alpha<1 
\label{eq:stable}
\end{alignat}
Pollard~\cite{Pollard} had earlier proved that $f_\alpha(t)>0$, so that
$p_\alpha(t)\ge0$, thereby completing his proof that  $E_\alpha(-x)$ is completely monotone for $0\le \alpha \le1$.
Pollard stopped at the point of deriving~(\ref{eq:Pollarddensity}), the density  $p_\alpha(t)\equiv P_\alpha^{\,\prime}(t)$.
As per initial task, we proceed to discuss $P_\alpha(t)$ explicitly.
We first recognise $f_\alpha(t)$ as  the density  of the stable distribution $F_\alpha$ on $[0,\infty)$
\begin{alignat}{3}
F_\alpha(t) &= \int_0^t f_\alpha(u)\,du  \qquad && 0<\alpha<1 
\label{eq:stableF}
\end{alignat}
with normalisation $F_\alpha(\infty)=1$.
In turn, the Pollard distribution $P_\alpha(t)$ is 
\begin{align}
P_\alpha(t) &= \int_0^t p_\alpha(u)\,du  = \frac{1}{\alpha} \int_0^t  f_\alpha(u^{-1/\alpha})\, u^{-1/\alpha-1} \, du 
\label{eq:PollardP1}
\end{align}
Janson~\cite{Janson} derived $P_\alpha(t)$  as a limiting distribution of 
a  P{\' o}lya urn scheme.
 $P_\alpha(t)$ is known as  the Mittag-Leffler distribution in the probabilistic literature (one of  two distributions 
bearing the same name as discussed later).

Setting $y=u^{-1/\alpha}$ in~(\ref{eq:PollardP1}) gives a simple relation between $P_\alpha$ and $F_\alpha$:
\begin{align}
P_\alpha(t)  &= \int_{t^{-1/\alpha}}^\infty  f_\alpha(y)\, dy 
 =  1- \int_0^{t^{-1/\alpha}} f_\alpha(y)\, dy   
  \equiv 1-  F_\alpha(t^{-1/\alpha}) 
\label{eq:PollardP}
\end{align}
This `duality' between the Mittag-Leffler and stable distributions 
 is key to the discussion that follows.
The Pollard result may accordingly be written in several equivalent forms: 
\begin{align}
E_\alpha(-x) 
&= \int_{0}^\infty e^{-xt} dP_\alpha(t) = \int_{0}^\infty e^{-t} P_\alpha(t/x)\, dt   \nonumber \\
{\rm or} \quad 
E_\alpha(-x^\alpha) &= \int_{0}^\infty e^{-t} P_\alpha(x^{-\alpha}t)\, dt
   =  \int_{0}^\infty e^{-t} (1-  F_\alpha(xt^{-1/\alpha}))\,dt 
\label{eq:LaplacePollardP}
\end{align}
Another  representation arising  from  change of variable in Pollard's original result is
\begin{align}
\alpha E_\alpha(-x^\alpha) &=  \int_0^\infty e^{-x^\alpha u} \,  f_\alpha(u^{-1/\alpha})\, u^{-1/\alpha-1} \, du  \nonumber \\
  &= x  \int_0^\infty e^{-t} \,  f_\alpha(xt^{-1/\alpha})\, t^{-1/\alpha-1} \, dt
\label{eq:LaplacePollardvariant}
\end{align}
Setting aside Pollard's contour integral proof, it is hard  to evaluate directly any of the equivalent integral representations   above
to demonstrate that they do indeed generate  $E_\alpha(-x), E_\alpha(-x^\alpha)$.
A method that may be convenient to  prove one representation effectively proves all other representations because they are  
interchangeable  ways of stating  the Pollard result.
In particular, Feller followed an  indirect  route to prove the representation~(\ref{eq:LaplacePollardP}), discussed next.  

\subsection{Feller's Method}
\label{sec:Feller}

In an illustration of the use of the two-dimensional Laplace transform, 
Feller~\cite{Feller2}(p453) considered  $1-F_\alpha(xt^{-1/\alpha})$ as a bivariate  distribution over $x>0,t>0$.
The Laplace transform over $x$, followed by that over  $t$ gives
\begin{align}
\int_{0}^\infty e^{-sx} (1-F_\alpha(xt^{-1/\alpha}))\,dx &= \frac{1}{s}-\frac{e^{-ts^\alpha}}{s}
\label{eq:FellerLaplace1} \\
\frac{1}{s}  \int_{0}^\infty e^{-\lambda t} \left(1-e^{-ts^\alpha}\right)dt &= \frac{1}{\lambda} \frac{s^{\alpha-1}}{\lambda+s^\alpha}
 \label{eq:FellerLaplace2}
\end{align}
By reference to~(\ref{eq:LaplaceML}), the right hand side of~(\ref{eq:FellerLaplace2}) is the 
 Laplace transform of $E_\alpha(-\lambda x^\alpha)/\lambda$.
Since the two-dimensional Laplace transform equivalently can  be evaluated first over $t$ then over $x$,
Feller concluded that 
\begin{align}
E_\alpha(-\lambda x^\alpha) &= \lambda \int_{0}^\infty e^{-\lambda t} (1-F_\alpha(xt^{-1/\alpha}))\,dt 
\label{eq:FellerPollard} 
\end{align}
which, for $\lambda=1$, is  the Pollard result in the form~(\ref{eq:LaplacePollardP}).
Feller's proof  is based on the interchange of the order of integration (Fubini's theorem) and the 
uniqueness of Laplace transforms.
We represent it  by the commutative diagram below, where $\scrL_{s\vert t}$ denotes
the one-dimensional Laplace transform of a bivariate source function at fixed $t$, 
to give 
a  bivariate function of $(s,t)$ where $s$ is the Laplace variable.
\begin{equation}
\begin{tikzpicture}[auto,scale=1.8, baseline=(current  bounding  box.center)]
\newcommand*{\size}{\scriptsize}%
\newcommand*{\gap}{.2ex}%
\newcommand*{\width}{3.0}%
\newcommand*{\height}{1.5}%

\node (P) at (0,0)  {$1-F_\alpha(xt^{-1/\alpha})$};
\node (Q) at ($(P)+(\width,0)$) {$\dfrac{1}{s}-\dfrac{e^{-ts^\alpha}}{s}$};
\node (B) at ($(P)-(0,\height)$) {$\dfrac{1}{\lambda}E_\alpha(-\lambda x^\alpha)$}; 
\node (C) at ($(B)+(\width,0)$) {$\dfrac{1}{\lambda}\dfrac{s^{\alpha-1}}{\lambda+s^\alpha}$};   
\draw[Myarrow] ([yshift =  \gap]P.east)  --  node[above] {\size $\scrL_{s\vert t}$} node[below]{\size easy}  ([yshift = \gap]Q.west) ;
\draw[Myarrow]([xshift  =  \gap]Q.south) --  node[left] {\size $\scrL_{\lambda\vert s}$} node[right] {\size easy} ([xshift =  \gap]C.north);
\draw[Myarrow] ([xshift =  \gap]P.south) -- node[left] {\size $\scrL_{\lambda\vert x}$}  node[right] {\size hard}
([xshift =  \gap]B.north);
\draw[Myarrow] ([yshift = +\gap]C.west) --  node[above] {\size $\scrL^{-1}_{x\vert \lambda}$} node[below]{\size easy}  ([yshift  = +\gap]B.east); 
\end{tikzpicture}
\label{eq:Fellerdiagram}
\end{equation}
The desired  proof  is the   ``hard'' direct path, which is equivalent to the  ``easy'' indirect path.
We will return to commutative diagram representation in a different context later in the paper
when we discuss the main theorem. 

Feller's concise proof uses  ``methods of probability theory'', as cited  by Pollard, only to the extent 
of choosing the  bivariate distribution as input to the   two-dimensional  Laplace transform.
Short of any  further insight, the methods by both  Pollard and Feller might  be described as more analytic  than probabilistic.

\subsection{Purpose and Scope of Paper}
\label{sec:purpose}
We assign appropriate  distribution guided by the  
 task of proving  that $E_\alpha(-x)$ is completely monotone.
We  first cast Feller's argument in such terms before proceeding to a more general probabilistic discussion.



The Mittag-Leffler function is of growing interest in probability theory and physics, 
with a diversity of applications, notably fractional calculus.
A comprehensive study of the  properties and applications of  the Mittag-Leffler function 
and its numerous generalisations is beyond the scope of this paper.
We  consciously restrict the scope  to  the theme of  complete monotonicity and  Mittag-Leffler functions, 
underpinned by probability theory. 

Other studies that explicitly discuss complete monotonicity and  Mittag-Leffler functions 
build upon  complex analytic approaches similar to Pollard's rather than
the probabilistic  underpinning discussed here
(de Oliviera et al.~\cite{Oliveira}, Mainardi and Garrappa~\cite{MainardiGarrappa}, 
G\'{o}rska et al.~\cite{Gorska}).
These papers comment on the fundamental  importance of the complete monotonicity  of Mittag-Leffler functions used in 
the modelling of physical phenomena, such as anomalous dielectric relaxation and viscoelasticity.



Finally, we are keenly aware that there are other views on the interpretation of ``methods of probability theory''.  
We comment on this before discussing our probabilistic approach in detail.

\subsection{Probabilistic Perspectives}
\label{sec:perspectives}

The phrase  `methods of probability theory' used by Pollard may suggest an 
 experiment  with random outcomes as a  fundamental  metaphor. 
As noted earlier,  $P_\alpha$ is derived  as a limiting distribution of 
a  P{\' o}lya urn scheme  in the probabilistic literature. 

Diversity  of approach is commonplace in  probability theory and mathematics more generally.
For example, in a  context of nonparametric  Bayesian analysis, 
Ferguson~\cite{Ferguson1} constructed the Dirichlet process based on 
the  gamma distribution as the fundamental probabilistic concept, 
without invoking a random experiment.
Blackwell and MacQueen~\cite{BlackwellMacQueen}  observed that the Ferguson approach
``involves a rather deep study of the gamma process'' as they proceeded to give  an alternate construction 
based on the metaphor of a  generalised P{\' o}lya urn scheme.
Adopting the one approach is not to deny or diminish the other,  but to bring attention to the diversity of thinking in  probability theory,
 even when the end result is the same mathematical  object.
We look upon this as healthy complementarity rather than undesirable contestation.
 

For our purpose, we have no need to invoke an underlying random experiment or indeed  an explicit random variable,
while not denying the latter  as an alternative probabilistic approach.
Hence, for example, we  shall continue to express the Laplace transform of a distribution as an explicit integral 
rather than as an expectation $\mathbb{E}\left[e^{-sX}\right]$ 
for  a random variable $X$.

\section{A Probabilistic Method}
\label{sec:probabilistic}

First, we note that the scale change $s\to t^{1/\alpha}s$ $(t>0)$ in~(\ref{eq:stable})  gives
\begin{align}
 e^{-t s^\alpha}  &= \int_0^\infty e^{-s x} f_\alpha(x\,t^{-1/\alpha}) t^{-1/\alpha}\,dx 
 \equiv \int_0^\infty e^{-s x} f_\alpha(x \vert t) \,dx
\label{eq:stablescaled}
\end{align}
where $f_\alpha(x\vert t)\equiv f_\alpha(x\,t^{-1/\alpha}) t^{-1/\alpha}$ is 
the stable density conditioned on the scale parameter $t$,
with $f_\alpha(x) \equiv f_\alpha(x\vert 1)$.
Correspondingly, the  stable distribution conditioned  on $t$ is
\begin{align}
F_\alpha(x\vert t) &= \int_0^x\, f_\alpha(u\vert t)\,du
     =  \int_0^{x t^{-1/\alpha}}  f_\alpha(u) \,du  \equiv F_\alpha(x t^{-1/\alpha})
\label{eq:stablescaleddistribution}
\end{align}
with Laplace transform 
$ e^{-t s^\alpha}/s$.

We then assign a distribution $G(t)$  to the scale parameter $t$ of 
$F_\alpha(x\vert t)$.
Then,  by the sum and  product rules of probability theory, the  unconditional  or marginal distribution  $M_\alpha(x)$  over $x$ is
\begin{align}
M_\alpha(x) &= \int_0^\infty F_\alpha(x\vert t) dG(t)
\label{eq:marginaldistribution}
\intertext{with Laplace transform}
\int_0^\infty e^{-s x} M_\alpha(x) \,dx  
       &= \frac{1}{s} \int_0^\infty e^{-ts^\alpha}\, dG(t)
\label{eq:marginaldistributionLaplace}
\end{align}
$M_\alpha$ 
is also referred to as  a  mixture distribution, 
arising from randomising or mixing the parameter $t$ in $F_\alpha(x\vert t)$  with $G(t)$.
This has the same import as saying that we assign a prior distribution  $G(t)$ on $t$ 
and we shall continue to use the latter language.

 $G$ may depend on one or more parameters. 
A notable example is the gamma distribution $G(\mu,\lambda)$ with shape and scale parameters  $\mu>0, \lambda>0$ respectively:
\begin{align}
dG(t\vert \mu,\lambda) 
 &=\dfrac{\lambda^\mu}{\Gamma(\mu)}\,t^{\mu-1}e^{-\lambda t}\, dt 
\label{eq:gammadistribution} 
\end{align}
$\lambda$ is not fundamental and may  be set to  $\lambda=1$ by  change of scale 
$t\to\lambda t$, while $\mu$ controls the shape of $G(t\vert \mu,\lambda)$.
The marginal~(\ref{eq:marginaldistribution})  becomes $M_{\alpha}(x\vert \mu,\lambda)$, with Laplace transform
\begin{align}
\int_0^\infty e^{-s x} M_\alpha(x\vert \mu,\lambda) \,dx &=  \frac{1}{s}\left(\frac{\lambda}{\lambda+ s^\alpha}\right)^\mu
       = \frac{1}{s}\left(1-\frac{s^\alpha}{\lambda+ s^\alpha}\right)^\mu
\label{eq:marginaldistributionLaplace}
\end{align}
We may now state Feller's approach  from a probabilistic  perspective.

\subsection{A Probabilistic View of Feller's Approach}
\label{sec:probFeller}

The case $\mu=1$ in~(\ref{eq:gammadistribution}) gives the exponential distribution $dG(t\vert \lambda) =  \lambda e^{-\lambda t}dt$.
Then $M_\alpha(x\vert \lambda)\equiv M_\alpha(x\vert \mu=1,\lambda)$ is 
\begin{align}
M_\alpha(x\vert \lambda) &= \int_0^\infty F_\alpha(x\vert t) dG(t\vert \lambda)
    = \lambda \int_0^\infty F_\alpha(x\vert t) e^{-\lambda t}\,dt
\label{eq:expprior} 
\end{align}
The Laplace transform of $M_\alpha(x\vert \lambda)$, read from~(\ref{eq:marginaldistributionLaplace}) with $\mu=1$, is
\begin{alignat}{3}
\int_0^\infty e^{-s x} &M_\alpha(x\vert \lambda) \,dx 
&&= \frac{1}{s}- \frac{s^{\alpha-1}}{\lambda+s^\alpha}
\label{eq:LaplaceMarginaldistribution} \\
\implies 
&\quad M_\alpha(x\vert \lambda) &&= 1-E_\alpha(-\lambda x^\alpha) \\
\implies 
&E_\alpha(-\lambda x^\alpha) &&= 1-M_\alpha(x\vert \lambda)
  = \lambda \int_0^\infty (1-F_\alpha(x\vert t)) e^{-\lambda t} \,dt
\label{eq:MLmarginal}
\end{alignat}
This reproduces Feller's result~(\ref{eq:FellerPollard}) 
from a probabilistic  perspective.
The difference is purely a matter  of conceptual outlook: 
\begin{description}
\item[Feller:] Study  the  two-dimensional Laplace transform of the bivariate distribution $1-F_\alpha(x t^{-1/\alpha})$, 
where $F_\alpha$ is the stable distribution. 
Deduce that $E_\alpha(-\lambda x^\alpha)/\lambda$ is the Laplace  transform of $1-F_\alpha(x t^{-1/\alpha})$ over $t$ at fixed $x$,
where $\lambda$ is the Laplace variable.
\item[Probabilistic:] Assign an exponential prior distribution $G(t\vert 1,\lambda)$ to the scale factor $t$ of the stable distribution 
$F_\alpha(x\vert t)\equiv F_\alpha(x t^{-1/\alpha})$, where $G(t\vert\mu,\lambda)$  is the gamma distribution.
Marginalise 
over $t$ to generate the Feller result directly.
\end{description}
Feller himself might also have established the result by the latter reasoning. 
Under subordination of processes, Feller~\cite{Feller2}(p451) discussed   mixture distributions but he did not  specifically discuss the Mittag-Leffler function in this context in his published work.
 The task fell on Pillai~\cite{Pillai} 
 to study $M_\alpha(x\vert\mu)\equiv M_\alpha(x\vert \mu, \lambda=1)$,
 including its infinite divisibility and the corresponding  Mittag-Leffler  stochastic process. 
 He also proved that  $M_\alpha(x\vert 1)=1-E_\alpha(-x^\alpha)$ (as  discussed  above), 
which he referred to as the Mittag-Leffler distribution.
There are thus two distributions bearing the  name ``Mittag-Leffler distribution'': 
$M_\alpha(x)=1-E_\alpha(-x^\alpha)$ and $P_\alpha(t) =1-  F_\alpha(t^{-1/\alpha})$.


The natural question arising from the probabilistic  approach is whether there might be other choices of $\mu$ 
in $G(\mu,\lambda)$ (or indeed other choices of $G$ altogether) 
that yield the Pollard result and, if so,  what   insight they might  offer.
At face value, there would appear to be nothing further to be said since other choices of $\mu$ 
can be expected to lead to different results,  beyond  the study of the Mittag-Leffler function. 
The main contribution of this paper is that, in fact,  there is a limit relationship that generates the Pollard result
for any $\mu>0$, as discussed next.

We first note, 
given the definition of the conditional stable density
\begin{align*}
f_\alpha(x\vert t)\equiv f_\alpha(x\,t^{-1/\alpha}) t^{-1/\alpha}
&\implies f_\alpha(1\vert t)\equiv f_\alpha(t^{-1/\alpha}) t^{-1/\alpha}
\end{align*}
that we may write $P_\alpha(t)$ of~(\ref{eq:PollardP1}) and the representation~(\ref{eq:LaplacePollardvariant}) 
of the Pollard result as 
\begin{align}
P_\alpha(t) &=  \int_0^t p_\alpha(u)\,du = \frac{1}{\alpha} \int_0^t f_\alpha(1\vert u) \, u^{-1}  \,du
\label{eq:Pollard1a} \\
\alpha E_\alpha(-\lambda x^\alpha) &= x \int_0^\infty f_\alpha(x\vert t) \, t^{-1} e^{-\lambda t} \,dt  \qquad 0<\alpha<1 
\label{eq:MLBayes1} \\
u=x^{-\alpha}t:\quad
E_\alpha(-\lambda x^\alpha) &= \int_0^\infty e^{-\lambda x^\alpha u} \, dP_\alpha(u)
\label{eq:MLBayes2}
\end{align} 
The intent  is to generate this representation using the general $G(\mu,\lambda)$ prior distribution,
{\it i.e.} without reference to  Pollard's analytic method and without explicit restriction to the $G(\mu=1,\lambda)$ case that 
is equivalent to Feller's approach, as demonstrated above.

\section{Main Contribution}
\label{sec:contribution}

We first  state Theorem~\ref{thm:main}, which warrants dedicated discussion,  even though it
is actually a special case of the  more general  Theorem~\ref{thm:ML3par} stated later.
We note first that the  density of the marginal distribution  $M_\alpha(x|\mu,\lambda)$ of Section~\ref{sec:probabilistic} is
\begin{align}
m_\alpha(x\vert \mu, \lambda) &= \int_0^\infty f_\alpha(x\vert t)\, dG(t\vert \mu, \lambda) \qquad \mu>0, \lambda>0 \nonumber \\
&= \frac{\lambda^\mu}{\Gamma(\mu)} \int_0^\infty f_\alpha(x\vert t)\, t^{\mu-1} e^{-\lambda t} \,dt  \nonumber \\
&= \frac{\mu \lambda^\mu}{\Gamma(\mu+1)} \int_0^\infty f_\alpha(x\vert t)\, t^{\mu-1} e^{-\lambda t} \,dt
\label{eq:marginaldensity}
\end{align} 
where the latter expression follows from the identity  $\mu\Gamma(\mu)=\Gamma(\mu+1)$.

\begin{theorem}
The limit
\begin{align}
\lim_{n\to\infty} \tfrac{n}{\mu} \, x \, m_\alpha(x\vert \tfrac{\mu}{n},\lambda) 
&= \lim_{n\to\infty} \tfrac{n}{\mu}  \, x \int_0^\infty f_\alpha(x\vert t)\, dG(t\vert \tfrac{\mu}{n},\lambda)  
\label{eq:limitmarginaldensity}
\intertext{is finite and independent of $\mu$ for any $\mu>0$.
This limit yields the following integral representation of the Mittag-Leffler function 
$E_\alpha(-\lambda x^\alpha)$}
\alpha E_\alpha(-\lambda x^\alpha)  &= x \int_0^\infty f_\alpha(x\vert t) \, t^{-1} e^{-\lambda t} \,dt
\label{eq:ML_intrep0} \\
u=x^{-\alpha}t:\quad
E_\alpha(-\lambda x^\alpha) 
&=  \int_0^\infty  e^{-\lambda x^\alpha u}\, dP_\alpha(u)
\label{eq:PollardML_intrep0}
 \intertext{where $P_\alpha(t)$ is  the (one-parameter) Pollard distribution}
P_\alpha(t) &= \frac{1}{\alpha} \int_0^t f_\alpha(1\vert u) \, u^{-1}  \,du \nonumber \\
 &= \frac{1}{\alpha} \int_0^t  f_\alpha(u^{-1/\alpha}) \, u^{-1/\alpha-1}  \,du \nonumber
\end{align} 
Hence $E_\alpha(-x)$  is completely monotone. 
\label{thm:main}
\end{theorem}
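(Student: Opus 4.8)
The plan is to evaluate the rescaled marginal density $\tfrac{n}{\mu}\,x\,m_\alpha(x\vert\tfrac{\mu}{n},\lambda)$ in two complementary ways — as an explicit $t$-integral obtained by letting $n\to\infty$ inside (\ref{eq:marginaldensity}), and through its Laplace transform in $x$ — and to reconcile the two by uniqueness of Laplace transforms. Fix $x>0$, $\lambda>0$ and $0<\alpha<1$ (the cases $\alpha=0,1$ are immediate, as noted by Pollard). First I would rewrite (\ref{eq:marginaldensity}) using $\Gamma(\mu/n)=\Gamma(1+\mu/n)/(\mu/n)$, so that the prefactor collapses to $\tfrac{n}{\mu}\,\lambda^{\mu/n}/\Gamma(\mu/n)=\lambda^{\mu/n}/\Gamma(1+\mu/n)$:
\[
\tfrac{n}{\mu}\,x\,m_\alpha(x\vert\tfrac{\mu}{n},\lambda)=\frac{\lambda^{\mu/n}}{\Gamma(1+\mu/n)}\;x\int_0^\infty f_\alpha(x\vert t)\,t^{\mu/n-1}e^{-\lambda t}\,dt .
\]
As $n\to\infty$ the prefactor tends to $1$ and $t^{\mu/n-1}\to t^{-1}$, both independently of $\mu$. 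To move the limit inside the integral I would use dominated convergence: for $n$ with $\mu/n\le1$ the prefactor is bounded and $t^{\mu/n-1}\le t^{-1}\mathbf{1}_{\{t\le1\}}+\mathbf{1}_{\{t>1\}}$, while $f_\alpha(x\vert t)=f_\alpha(xt^{-1/\alpha})t^{-1/\alpha}\le\|f_\alpha\|_\infty\,t^{-1/\alpha}$ controls the range $t>1$ against $e^{-\lambda t}$, and the power-law tail $f_\alpha(y)=O(y^{-\alpha-1})$ of the stable density makes $f_\alpha(x\vert t)\,t^{-1}=O(1)$ as $t\to0^+$, controlling $t\le1$. Hence the limit exists, is finite and $\mu$-independent, and equals $x\int_0^\infty f_\alpha(x\vert t)\,t^{-1}e^{-\lambda t}\,dt=:h_\alpha(x)$.

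Next I would identify $h_\alpha$. Multiplying (\ref{eq:marginaldistributionLaplace}) by $s$ shows that $m_\alpha(x\vert\tfrac{\mu}{n},\lambda)$ has Laplace transform $(\lambda/(\lambda+s^\alpha))^{\mu/n}$, so $x\,m_\alpha(x\vert\tfrac{\mu}{n},\lambda)$ has transform $-\tfrac{d}{ds}(\lambda/(\lambda+s^\alpha))^{\mu/n}=\tfrac{\mu}{n}\,\alpha\lambda^{\mu/n}s^{\alpha-1}(\lambda+s^\alpha)^{-\mu/n-1}$, and therefore
\[
\int_0^\infty e^{-sx}\;\tfrac{n}{\mu}\,x\,m_\alpha(x\vert\tfrac{\mu}{n},\lambda)\,dx=\frac{\alpha\lambda^{\mu/n}s^{\alpha-1}}{(\lambda+s^\alpha)^{\mu/n+1}}\xrightarrow[n\to\infty]{}\frac{\alpha s^{\alpha-1}}{\lambda+s^\alpha}.
\]
The dominator built in the previous paragraph, weighted by $e^{-sx}$, is still integrable in $x$ (a routine Tonelli computation using $\int_0^\infty e^{-sx}f_\alpha(x\vert t)\,dx=e^{-ts^\alpha}$), so I may interchange the limit with $\int_0^\infty e^{-sx}\,dx$ and conclude $\int_0^\infty e^{-sx}h_\alpha(x)\,dx=\alpha s^{\alpha-1}/(\lambda+s^\alpha)$. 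Comparing with (\ref{eq:LaplaceML}) and invoking uniqueness of Laplace transforms of continuous functions gives $h_\alpha(x)=\alpha E_\alpha(-\lambda x^\alpha)$, which is precisely (\ref{eq:ML_intrep0}). (As an independent cross-check, applying Fubini directly to $h_\alpha$ with $\int_0^\infty e^{-sx}f_\alpha(x\vert t)\,dx=e^{-ts^\alpha}$ from (\ref{eq:stablescaled}) reproduces the same transform.)

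Then I would perform the substitution $u=x^{-\alpha}t$ in (\ref{eq:ML_intrep0}); since $xt^{-1/\alpha}=u^{-1/\alpha}$ one checks $x\,f_\alpha(x\vert t)\,t^{-1}e^{-\lambda t}\,dt=f_\alpha(u^{-1/\alpha})u^{-1/\alpha-1}e^{-\lambda x^\alpha u}\,du$, giving $\alpha E_\alpha(-\lambda x^\alpha)=\int_0^\infty e^{-\lambda x^\alpha u}f_\alpha(u^{-1/\alpha})u^{-1/\alpha-1}\,du$, i.e.\ (\ref{eq:PollardML_intrep0}) with $dP_\alpha(u)=\tfrac1\alpha f_\alpha(u^{-1/\alpha})u^{-1/\alpha-1}\,du$ as in (\ref{eq:PollardP1}). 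By Pollard's earlier result $f_\alpha>0$, so $P_\alpha$ is nondecreasing, and by (\ref{eq:PollardP}) it is bounded with $P_\alpha(\infty)=1$. Finally, setting $x=1$ in (\ref{eq:PollardML_intrep0}) yields $E_\alpha(-\lambda)=\int_0^\infty e^{-\lambda u}\,dP_\alpha(u)$, the Laplace--Stieltjes transform of a bounded nondecreasing function; by Bernstein's theorem $\lambda\mapsto E_\alpha(-\lambda)$ — equivalently $E_\alpha(-x)$ — is completely monotone on $[0,\infty)$, and with the trivial cases $\alpha\in\{0,1\}$ this covers all $0\le\alpha\le1$.

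The main obstacle is making the two dominated-convergence steps rigorous with one dominating function that is uniform in $n$. This hinges on the two-sided behaviour of the one-sided stable density $f_\alpha$: its heavy tail $f_\alpha(y)=O(y^{-\alpha-1})$ at infinity, which forces $f_\alpha(x\vert t)\,t^{-1}=O(1)$ as $t\to0^+$ and thereby guarantees both the existence and the finiteness of the limiting integral; and the super-exponential decay of $f_\alpha$ near the origin, which together with the factor $e^{-\lambda t}$ controls the regime $t\to\infty$. Everything else is elementary: the $\mu$-independence of the limit is forced by the cancellation $\tfrac{n}{\mu\,\Gamma(\mu/n)}=1/\Gamma(1+\mu/n)\to1$ together with $t^{\mu/n-1}\to t^{-1}$, and the Laplace-side computation is just a differentiation in $s$ of the known transform (\ref{eq:marginaldistributionLaplace}).
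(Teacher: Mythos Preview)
Your proof is correct and follows essentially the same route as the paper: compute the Laplace transform of $x\,m_\alpha(x\vert\mu,\lambda)$ by differentiating in $s$, take the limit $\mu\to\mu/n$ scaled by $n/\mu$ to recognise $\alpha s^{\alpha-1}/(\lambda+s^\alpha)$, and separately evaluate the limit of the density using the identity $\tfrac{n}{\mu}\lambda^{\mu/n}/\Gamma(\mu/n)=\lambda^{\mu/n}/\Gamma(1+\mu/n)\to1$ to obtain the $t^{-1}e^{-\lambda t}$ integral, then change variables to reach the Pollard form. The only difference is that you supply explicit dominated-convergence justifications (via the stable tail $f_\alpha(y)=O(y^{-\alpha-1})$ and boundedness of $f_\alpha$) where the paper simply asserts that the limits ``readily follow''; this is added rigour rather than a different argument.
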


\begin{proof}[Proof of Theorem \ref{thm:main}]
The Laplace transform 
of $x \,m_\alpha(x\vert \mu,\lambda)$ is
\begin{align*}
\int_0^\infty e^{-sx} x \,m_\alpha(x\vert \mu,\lambda) \, dx
&= \int_0^\infty e^{-sx} x \int_0^\infty f_\alpha(x\vert t)\, dG(t\vert \mu, \lambda) \, dx \\
&= -\frac{d}{ds} \int_0^\infty \int_0^\infty  e^{-sx}  f_\alpha(x\vert t) \, dx \, dG(t\vert \mu, \lambda)  \\
&= -\frac{d}{ds} \int_0^\infty  e^{-t s^\alpha} \, dG(t\vert \mu, \lambda)   \\
&= \alpha s^{\alpha-1}  \int_0^\infty t \,e^{-t s^\alpha} \, dG(t\vert \mu, \lambda)   \\
&= \alpha s^{\alpha-1}  \frac{\lambda^\mu}{\Gamma(\mu)}  \int_0^\infty t^\mu \, e^{-(\lambda+s^\alpha) t}  \, dt  \\
&= \alpha s^{\alpha-1} \frac{\lambda^\mu}{\Gamma(\mu)}  \frac{\Gamma(\mu+1)}{(\lambda+s^\alpha)^{\mu+1}} \\
&=  \lambda^\mu \mu \alpha \frac{s^{\alpha-1} }{(\lambda+s^\alpha)^{\mu+1}} \\
\implies \;
\lim_{n\to\infty} \tfrac{n}{\mu} \int_0^\infty e^{-sx} x \, &m_\alpha(x\vert \tfrac{\mu}{n},\lambda) \, dx 
= \alpha \frac{s^{\alpha-1}}{\lambda+s^\alpha} 
\intertext{which is the Laplace transform of $\alpha E_\alpha(-\lambda x^\alpha)$.
With the aid of~(\ref{eq:marginaldensity}), it also readily follows that the limit~(\ref{eq:limitmarginaldensity}) is} 
\lim_{n\to\infty} \tfrac{n}{\mu} \, x \, m_\alpha(x\vert \tfrac{\mu}{n},\lambda)
&= x \int_0^\infty f_\alpha(x\vert t) \, t^{-1} e^{-\lambda t} \,dt 
\end{align*}
The integral representations (\ref{eq:ML_intrep0}) and (\ref{eq:PollardML_intrep0}) of $E_\alpha(-\lambda x^\alpha)$   follow,
hence the  conclusion that  $E_\alpha(-x)$ is completely monotone.
\end{proof}

Pursuing the probabilistic  theme, we  turn next to Laplace convolution  
to demonstrate the complete monotonicity of  the two and three parameter Mittag-Leffler functions.

\section{A Convolution  Representation}
\label{sec:convolution}

Toward a more general discussion, we first  present an alternative representation of $x f_\alpha(x|t)$ using Laplace convolution.
 The convolution $\{\rho \star f\}(x)$ of  $\rho(x), f(x)$ 
 is given by
  \begin{align}
 \{\rho\star f\}(x) &= \int_0^x \rho(x-u)f(u)\, du  
 \label{eq:convolution}
\end{align}
The  convolution theorem states that the Laplace transform of  $\{\rho \star f\}$ is a product of the 
Laplace transforms of  $\rho, f$.

\subsection{One Parameter Case}
\label{sec:1par}

\begin{proposition}
Let $\rho_\alpha(x) = x^{-\alpha}/\Gamma(1-\alpha)$, $0<\alpha<1$  with Laplace transform 
$s^{\alpha-1}$.
Let  $\{\rho_\alpha\star f_\alpha(\cdot\vert t)\}(x)$ be  the convolution of $\rho_\alpha(x)$ and $f_\alpha(x\vert t)$
with Laplace transform $s^{\alpha-1}e^{-ts^\alpha}$.
Then  
\begin{align}
x\, f_\alpha(x\vert t) =\alpha\, t\{\rho_\alpha\star f_\alpha(\cdot\vert t)\}(x) = \alpha\, \{\rho_\alpha\star f_\alpha\}(xt^{-1/\alpha})
\label{eq:convolution1}
\end{align}
where $\{\rho_\alpha\star f_\alpha\}(x)$ is the convolution of $\rho_\alpha(x)$ and $f_\alpha(x)\equiv f_\alpha(x|1)$.
For compatibility with later discussion, we also use  the name $w_\alpha(x\vert t)$ defined by
$\alpha w_\alpha(x\vert t) \equiv x\, f_\alpha(x\vert t)$.
\label{prop:convolution1}
\end{proposition}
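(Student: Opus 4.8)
The plan is to establish both equalities at the level of Laplace transforms, invoke their uniqueness, and then reduce the second identity to the scaling property of the stable density. First I would record the elementary transform $\int_0^\infty e^{-sx}x^{-\alpha}\,dx = \Gamma(1-\alpha)\,s^{\alpha-1}$ for $0<\alpha<1$, which confirms that $\rho_\alpha$ has Laplace transform $s^{\alpha-1}$. Combined with the scaled stable transform $\int_0^\infty e^{-sx}f_\alpha(x\vert t)\,dx = e^{-ts^\alpha}$ from~(\ref{eq:stablescaled}), the convolution theorem yields that $\{\rho_\alpha\star f_\alpha(\cdot\vert t)\}(x)$ has Laplace transform $s^{\alpha-1}e^{-ts^\alpha}$, as asserted in the statement.

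Next I would compute the transform of the left-hand side of~(\ref{eq:convolution1}). Differentiating under the integral sign — legitimate because $f_\alpha(x\vert t)$ decays fast enough — gives
\[
\int_0^\infty e^{-sx}\,x f_\alpha(x\vert t)\,dx = -\frac{d}{ds}\,e^{-ts^\alpha} = \alpha t\, s^{\alpha-1}e^{-ts^\alpha},
\]
which is precisely $\alpha t$ times the transform of $\{\rho_\alpha\star f_\alpha(\cdot\vert t)\}(x)$. By uniqueness of the Laplace transform, the first equality $x f_\alpha(x\vert t) = \alpha t\,\{\rho_\alpha\star f_\alpha(\cdot\vert t)\}(x)$ follows.

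For the second equality I would pass from the $t$-conditioned convolution to the unconditioned one using $f_\alpha(x\vert t) = f_\alpha(xt^{-1/\alpha})\,t^{-1/\alpha}$. Substituting $v = ut^{-1/\alpha}$ in $\int_0^x \rho_\alpha(x-u)f_\alpha(u\vert t)\,du$ and using the homogeneity $\rho_\alpha(t^{1/\alpha}y) = t^{-1}\rho_\alpha(y)$, the integral collapses to $t^{-1}\{\rho_\alpha\star f_\alpha\}(xt^{-1/\alpha})$, so that $\alpha t\,\{\rho_\alpha\star f_\alpha(\cdot\vert t)\}(x) = \alpha\,\{\rho_\alpha\star f_\alpha\}(xt^{-1/\alpha})$. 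Equivalently, the same identity can be read off directly by comparing Laplace transforms after the scale change $s\to t^{1/\alpha}s$, exactly as was done to obtain~(\ref{eq:stablescaled}) itself.

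The only place asking for care is the bookkeeping in the change of variables for the scaling step — keeping track of the two powers of $t^{-1/\alpha}$ (one from the density, one from the kernel $\rho_\alpha$) together with the Jacobian — but this is entirely routine, and I anticipate no substantive obstacle; the proposition is essentially a transcription of an elementary Laplace-transform computation.
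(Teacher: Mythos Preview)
Your proposal is correct and follows essentially the same route as the paper: both arguments identify the Laplace transform of $x f_\alpha(x\vert t)$ as $-\tfrac{d}{ds}e^{-ts^\alpha}=\alpha t\,s^{\alpha-1}e^{-ts^\alpha}$ to obtain the first equality via uniqueness, and both reduce the second equality by the substitution $u\mapsto ut^{-1/\alpha}$ together with the homogeneity $\rho_\alpha(t^{1/\alpha}y)=t^{-1}\rho_\alpha(y)$. The only cosmetic additions are your explicit verification of the transform of $\rho_\alpha$ and the remark that the scaling identity can alternatively be read off from Laplace transforms; neither changes the substance.
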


\begin{proof}[Proof of Proposition  \ref{prop:convolution1}]
By the  convolution theorem, $\{\rho_\alpha\star f_\alpha(\cdot\vert t)\}(x)$ has Laplace transform 
\begin{align*} 
s^{\alpha-1} e^{-ts^\alpha} = -\frac{1}{\alpha t}\frac{d}{ds} e^{-ts^\alpha} 
 &=  \frac{1}{\alpha t} \int_0^\infty e^{-sx} x f_\alpha(x\vert t)\, dx \nonumber \\
\implies\; 
 \alpha\, t\,  \{\rho_\alpha\star f_\alpha(\cdot\vert t)\}(x) &= x f_\alpha(x\vert t)   
\end{align*}
The convolution $\{\rho_\alpha\star f_\alpha(\cdot\vert t)\}(x)$ takes the explicit form:
\begin{align*} 
\{\rho_\alpha\star f_\alpha(\cdot\vert t)\}(x) &= \int_0^x \rho_\alpha(x-u)f_\alpha(u\vert t)\, du  \\
&= \int_0^x \rho_\alpha(x-u)f_\alpha(u t^{-1/\alpha})t^{-1/\alpha} \, du  \\
y=ut^{-1/\alpha}:\quad
&= \int_0^{xt^{-1/\alpha}} \rho_\alpha(x-yt^{1/\alpha})f_\alpha(y) \, dy  \\
&= \int_0^{xt^{-1/\alpha}} \rho_\alpha(t^{1/\alpha}(xt^{-1/\alpha}-y))f_\alpha(y) \, dy  \\
&= t^{-1}\int_0^{xt^{-1/\alpha}} \rho_\alpha(xt^{-1/\alpha}-y)f_\alpha(y) \, dy  \\
&= t^{-1} \{\rho_\alpha\star f_\alpha\}(xt^{-1/\alpha}) 
\end{align*}
\label{proof:prop:convolution1}
so that $\alpha w_\alpha(x\vert t) \equiv
x\, f_\alpha(x\vert t) =\alpha\, t\{\rho_\alpha\star f_\alpha(\cdot\vert t)\}(x) = \alpha\, \{\rho_\alpha\star f_\alpha\}(xt^{-1/\alpha})$.
\end{proof}

Hence the following are equivalent representations of  the  Pollard distribution $P_\alpha(t)$:
\begin{align} 
P_\alpha(t) &= \int_0^t w_\alpha(1\vert t) \, u^{-1}  \,du
 \equiv  \frac{1}{\alpha} \int_0^t f_\alpha(1\vert u) \, u^{-1}  \,du  \nonumber \\
 &= \int_0^t \{\rho_\alpha\star f_\alpha(\cdot\vert u)\}(1)   \, du  \nonumber \\
 &= \int_0^t \{\rho_\alpha\star f_\alpha\}(u^{-1/\alpha})\,  u^{-1}  \, du 
 \label{eq:PollardP1par} 
 \end{align}

The motivation for the convolution representation  is  to facilitate generalisation.
Specifically, the Laplace transform $\alpha t s^{\alpha-1}e^{-ts^\alpha}$ of $xf_\alpha(x\vert t)$ 
is    the derivative of $-e^{-ts^\alpha}$.
However, a more general term like $t s^{\alpha-\beta}e^{-ts^\alpha}$  cannot arise from simple derivatives of 
$e^{-ts^\alpha}$ for non-integer $\beta$.
It might  be interpreted as a fractional derivative, as can be represented instead by  convolutions.
Accordingly, we  proceed to consider more general convolutions than the  
convolution form~(\ref{eq:convolution1}) for $xf_\alpha(x\vert t)$.

\subsection{Two Parameter Case}
\label{sec:2par}

First, we introduce the two-parameter Mittag-Leffler function  
\begin{align}
E_{\alpha,\beta}(x) &= \sum_{k=0}^\infty \frac{x^k}{\Gamma(\alpha k+\beta)} 
\label{eq:ML2parseries}
\end{align}
The Laplace transform of $x^{\beta-1}E_{\alpha,\beta}(-\lambda x^\alpha)$  is
\begin{align}
\int_0^\infty e^{-sx} x^{\beta-1}E_{\alpha,\beta}(-\lambda x^\alpha) \,dx  &= \frac{s^{\alpha-\beta}}{\lambda+s^\alpha} 
\label{eq:LaplaceML2par}
\end{align}
We may now proceed to prove  that $E_{\alpha,\beta}(-x)$ is completely monotone
by showing  that it is the Laplace  transform of a two-parameter variant $P_{\alpha,\beta}(t)$ of the Pollard distribution.
We follow a corresponding  two-parameter variant of the convolution argument presented above for the one-parameter case.
\begin{proposition}
Let $\rho_{\alpha,\beta}(x) = x^{\beta-\alpha-1}/\Gamma(\beta-\alpha)$ $\beta>\alpha$, with Laplace transform 
$s^{\alpha-\beta}$.
Let  $\{\rho_{\alpha,\beta}\star f_\alpha(\cdot\vert t)\}(x)$  
be the convolution of $\rho_{\alpha,\beta}(x)$ and $f_\alpha(x\vert t)$. 
Then  
\begin{align}
w_{\alpha,\beta}(x\vert t) \equiv
t\, \{\rho_{\alpha,\beta}\star f_\alpha(\cdot\vert t)\}(x) &=  t^{(\beta-1)/\alpha}\ \{\rho_{\alpha,\beta}\star f_\alpha\}(xt^{-1/\alpha})
\label{eq:convolution2}
\end{align}
$($the name $w_{\alpha,\beta}(x\vert t)$ is a shorthand adopted for convenience$)$.
\label{prop:convolution2}
\end{proposition}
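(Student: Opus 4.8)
The plan is to mirror the one-parameter argument in Proposition~\ref{prop:convolution1} almost verbatim, since the only change is the replacement of the kernel $\rho_\alpha$ by $\rho_{\alpha,\beta}$. First I would invoke the convolution theorem: the Laplace transform of $\rho_{\alpha,\beta}(x)$ is $s^{\alpha-\beta}$ and that of $f_\alpha(x\vert t)$ is $e^{-ts^\alpha}$ (from~(\ref{eq:stablescaled})), so the Laplace transform of $\{\rho_{\alpha,\beta}\star f_\alpha(\cdot\vert t)\}(x)$ is $s^{\alpha-\beta}e^{-ts^\alpha}$, and hence that of $w_{\alpha,\beta}(x\vert t)=t\,\{\rho_{\alpha,\beta}\star f_\alpha(\cdot\vert t)\}(x)$ is $t\,s^{\alpha-\beta}e^{-ts^\alpha}$. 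This establishes $w_{\alpha,\beta}$ as a well-defined function and fixes its Laplace transform, which is what later propositions will need.

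Second I would establish the scaling identity, i.e.\ the second equality in~(\ref{eq:convolution2}). This is the direct analogue of the explicit computation at the end of the proof of Proposition~\ref{prop:convolution1}: write out the convolution integral, substitute $f_\alpha(u\vert t)=f_\alpha(ut^{-1/\alpha})t^{-1/\alpha}$, change variables $y=ut^{-1/\alpha}$, and then use the homogeneity of the kernel, $\rho_{\alpha,\beta}(t^{1/\alpha}z)=t^{(\beta-\alpha-1)/\alpha}\rho_{\alpha,\beta}(z)$, which follows from $\rho_{\alpha,\beta}(x)=x^{\beta-\alpha-1}/\Gamma(\beta-\alpha)$. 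Collecting the powers of $t$ from the Jacobian and from the homogeneity gives
\begin{align*}
\{\rho_{\alpha,\beta}\star f_\alpha(\cdot\vert t)\}(x) &= t^{(\beta-1)/\alpha-1}\,\{\rho_{\alpha,\beta}\star f_\alpha\}(xt^{-1/\alpha}),
\end{align*}
and multiplying by $t$ yields the claimed form for $w_{\alpha,\beta}(x\vert t)$. One can cross-check the exponent by setting $\beta=1$: then $\rho_{\alpha,1}=\rho_\alpha$, the prefactor becomes $t^0=1$, and we recover $w_{\alpha,1}(x\vert t)=\{\rho_\alpha\star f_\alpha\}(xt^{-1/\alpha})\cdot t = w_\alpha(x\vert t)$ as in~(\ref{eq:convolution1}).

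There is essentially no serious obstacle here; the statement is a bookkeeping exercise in Laplace transforms and scaling, and the ``hard'' content (positivity of $f_\alpha$, hence of the convolution, and the Fubini interchanges) is already available from the earlier sections. The one point requiring a little care is the constraint $\beta>\alpha$, which is exactly what makes $\rho_{\alpha,\beta}(x)=x^{\beta-\alpha-1}/\Gamma(\beta-\alpha)$ a locally integrable (indeed, when $\beta<\alpha+1$, integrable-at-zero) function with a genuine Laplace transform $s^{\alpha-\beta}$; I would note this explicitly so that the convolution~(\ref{eq:convolution}) and its Laplace transform are justified. With that caveat recorded, the proof is complete once the two displayed identities above are assembled.
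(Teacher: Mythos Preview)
Your proposal is correct and the core argument---writing out the convolution, substituting $f_\alpha(u\vert t)=t^{-1/\alpha}f_\alpha(ut^{-1/\alpha})$, changing variables $y=ut^{-1/\alpha}$, and using the homogeneity $\rho_{\alpha,\beta}(t^{1/\alpha}z)=t^{(\beta-\alpha-1)/\alpha}\rho_{\alpha,\beta}(z)$---is exactly the paper's proof. Your first step (recording the Laplace transform $t\,s^{\alpha-\beta}e^{-ts^\alpha}$ of $w_{\alpha,\beta}$) is not part of the paper's proof here since the proposition only asserts the scaling identity, and in your $\beta=1$ sanity check the stray factor ``$\cdot\,t$'' should be dropped, but neither point affects the validity of the argument.
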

\begin{proof}[Proof of Proposition  \ref{prop:convolution2}]
\begin{align*} 
\{\rho_{\alpha,\beta}\star f_\alpha(\cdot\vert t)\}(x) &= \int_0^x \rho_{\alpha,\beta}(x-u)f_\alpha(u\vert t)\, du  \\
&= \int_0^{xt^{-1/\alpha}} \rho_{\alpha,\beta}(t^{1/\alpha}(xt^{-1/\alpha}-u))f_\alpha(u) \, du  \\
&= t^{(\beta-1)/\alpha-1} \int_0^{xt^{-1/\alpha}} \rho_{\alpha,\beta}(xt^{-1/\alpha}-u)f_\alpha(u) \, du  \\
&=  t^{(\beta-1)/\alpha-1} \{\rho_{\alpha,\beta}\star f_\alpha\}(xt^{-1/\alpha})  \\
\end{align*}
\label{proof:prop:convolution2}
Thus $w_{\alpha,\beta}(x\vert t) \equiv t\,\{\rho_{\alpha,\beta}\star f_\alpha(\cdot\vert t)\}(x)
= t^{(\beta-1)/\alpha} \{\rho_{\alpha,\beta}\star f_\alpha\}(xt^{-1/\alpha})$.
\end{proof}

\begin{theorem}
The   two-parameter Mittag-Leffler function $E_{\alpha,\beta}(-\lambda x^\alpha)$ has the integral representation
\begin{align} 
E_{\alpha,\beta}(-\lambda x^\alpha) &= \int_0^\infty e^{-\lambda x^\alpha t} \, dP_{\alpha,\beta}(t)
 \label{eq:ML2par} 
 \intertext{where $P_{\alpha,\beta}(t)$, which we refer to as  the two-parameter Pollard distribution, is}
 P_{\alpha,\beta}(t) &= \int_0^t w_{\alpha,\beta}(1\vert u) \, u^{-1}\, du \nonumber \\
 &\equiv \int_0^t \{\rho_{\alpha,\beta}\star f_\alpha(\cdot\vert u)\}(1)  \, du \nonumber \\
 &= \int_0^t \{\rho_{\alpha,\beta}\star f_\alpha\}(u^{-1/\alpha})\,  u^{(\beta-1)/\alpha-1}  \, du 
 \label{eq:PollardP2par}
 \end{align}
Hence $E_{\alpha,\beta}(-x)$ is completely monotone.
\label{thm:ML2par}
\end{theorem}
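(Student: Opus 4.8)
\emph{Plan.} The argument will run exactly parallel to the proof of Theorem~\ref{thm:main}, with the kernel $x\,f_\alpha(x\vert t)$ there replaced by the two‑parameter kernel $w_{\alpha,\beta}(x\vert t)=t\,\{\rho_{\alpha,\beta}\star f_\alpha(\cdot\vert t)\}(x)$ of Proposition~\ref{prop:convolution2} and the power $s^{\alpha-1}$ replaced by $s^{\alpha-\beta}$. First I would mix $w_{\alpha,\beta}(x\vert t)$ against the gamma density $G(t\vert\mu,\lambda)$ of~(\ref{eq:gammadistribution}) and compute the Laplace transform in $x$. By the convolution theorem the $x$‑Laplace transform of $w_{\alpha,\beta}(x\vert t)$ is $t\,s^{\alpha-\beta}e^{-ts^\alpha}$ (the product of the transform $s^{\alpha-\beta}$ of $\rho_{\alpha,\beta}$ with the transform $e^{-ts^\alpha}$ of $f_\alpha(\cdot\vert t)$); integrating against $dG(t\vert\mu,\lambda)$ and using $\int_0^\infty t^{\mu}e^{-(\lambda+s^\alpha)t}\,dt=\Gamma(\mu+1)/(\lambda+s^\alpha)^{\mu+1}$ gives
\[
\int_0^\infty e^{-sx}\Bigl(\int_0^\infty w_{\alpha,\beta}(x\vert t)\,dG(t\vert\mu,\lambda)\Bigr)dx
=\mu\,\lambda^{\mu}\,\frac{s^{\alpha-\beta}}{(\lambda+s^\alpha)^{\mu+1}},
\]
the two‑parameter analogue of the displayed chain in the proof of Theorem~\ref{thm:main}. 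All interchanges of the order of integration are licensed by Tonelli's theorem, since every integrand is nonnegative.

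Next I would apply the same limiting device: replace $\mu$ by $\mu/n$, multiply by $n/\mu$, and let $n\to\infty$. Then $\lambda^{\mu/n}\to1$ and $(\lambda+s^\alpha)^{\mu/n+1}\to\lambda+s^\alpha$, so the transform above tends to $s^{\alpha-\beta}/(\lambda+s^\alpha)$, which by~(\ref{eq:LaplaceML2par}) is precisely the Laplace transform of $x^{\beta-1}E_{\alpha,\beta}(-\lambda x^\alpha)$. On the source side one has the elementary pointwise limit $\tfrac{n}{\mu}\,dG(t\vert\tfrac{\mu}{n},\lambda)\to t^{-1}e^{-\lambda t}\,dt$ (because $\tfrac{n}{\mu}/\Gamma(\tfrac{\mu}{n})=1/\Gamma(\tfrac{\mu}{n}+1)\to1$), so by uniqueness/continuity of Laplace transforms the limit is identified as $x^{\beta-1}E_{\alpha,\beta}(-\lambda x^\alpha)=\int_0^\infty w_{\alpha,\beta}(x\vert t)\,t^{-1}e^{-\lambda t}\,dt$, finite and independent of $\mu$. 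Putting $u=x^{-\alpha}t$ and substituting the scaling form $w_{\alpha,\beta}(x\vert t)=t^{(\beta-1)/\alpha}\{\rho_{\alpha,\beta}\star f_\alpha\}(xt^{-1/\alpha})$ from~(\ref{eq:convolution2}), the factors $x^{\beta-1}$ cancel and one is left with $E_{\alpha,\beta}(-\lambda x^\alpha)=\int_0^\infty e^{-\lambda x^\alpha u}\,dP_{\alpha,\beta}(u)$, $dP_{\alpha,\beta}(u)=\{\rho_{\alpha,\beta}\star f_\alpha\}(u^{-1/\alpha})\,u^{(\beta-1)/\alpha-1}\,du$, which is~(\ref{eq:PollardP2par}). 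Complete monotonicity is then immediate: since $\beta>\alpha$ the kernel $\rho_{\alpha,\beta}(x)=x^{\beta-\alpha-1}/\Gamma(\beta-\alpha)\ge0$, and $f_\alpha\ge0$ by Pollard~\cite{Pollard}, so $\{\rho_{\alpha,\beta}\star f_\alpha\}\ge0$ and $P_{\alpha,\beta}$ is nondecreasing; letting $\lambda x^\alpha\to0$ in the representation gives $P_{\alpha,\beta}(\infty)=E_{\alpha,\beta}(0)=1/\Gamma(\beta)<\infty$, so $P_{\alpha,\beta}$ is bounded, and with $\lambda=1$, $y=x^\alpha$ one gets $E_{\alpha,\beta}(-y)=\int_0^\infty e^{-yu}\,dP_{\alpha,\beta}(u)$, completely monotone by Bernstein's theorem.

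The step I expect to cost the most effort is the passage to the limit in the second paragraph: one must justify exchanging $\lim_{n\to\infty}$ with the $t$‑integral — e.g.\ by dominated convergence, bounding the pre‑limit integrands for large $n$ by a fixed multiple of $w_{\alpha,\beta}(x\vert t)\,(t^{-1}\mathbf{1}_{\{t\le1\}}+t^{\mu-1}\mathbf{1}_{\{t>1\}})\,e^{-\lambda t}$, which is integrable because $w_{\alpha,\beta}(x\vert t)=O(t)$ as $t\to0$ (from the Tauberian behaviour of the transform $s^{\alpha-\beta}e^{-s^\alpha}$ of $\{\rho_{\alpha,\beta}\star f_\alpha\}$ as $s\to0$, together with~(\ref{eq:convolution2})) while $w_{\alpha,\beta}(x\vert t)\to0$ superexponentially as $t\to\infty$ since the stable density $f_\alpha$ does near the origin — and to invoke the continuity theorem for Laplace transforms so that convergence of the transforms transfers to convergence of the densities themselves. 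A secondary point to flag is that $\beta>\alpha$ is inherited from Proposition~\ref{prop:convolution2}: for $\beta\le\alpha$ the kernel $\rho_{\alpha,\beta}$ fails to be locally integrable and this convolution route breaks down, so that range would need a separate treatment (analytic continuation in $\beta$, or an iterated convolution), which I would defer.
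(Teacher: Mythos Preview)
Your proposal is correct and follows essentially the same approach as the paper: the paper defers the proof of Theorem~\ref{thm:ML2par} to the three-parameter Theorem~\ref{thm:ML3par}, whose proof is exactly your argument with the kernel $w_{\alpha,\beta}$ replaced by $w^\gamma_{\alpha,\beta}$ and $s^{\alpha-\beta}$ by $s^{\alpha\gamma-\beta}$. You supply additional care on the limit exchange (dominated convergence) and on the nonnegativity and boundedness of $P_{\alpha,\beta}$ that the paper leaves implicit.
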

\begin{proof}[Proof of Theorem \ref{thm:ML2par}]
The theorem is a particular case of the more general Theorem~\ref{thm:ML3par} below, hence the current proof is deferred 
to that of the latter theorem.
\end{proof}

\subsection{Three Parameter Case}
\label{sec:3par}

The three-parameter Mittag-Leffler function, also known as the Prabhakar function,  is given by  
\begin{align}
E^\gamma_{\alpha,\beta}(x) &= \frac{1}{\Gamma(\gamma)} 
   \sum_{k=0}^\infty \frac{\Gamma(\gamma+k)}{k!\,\Gamma(\alpha k+\beta)}\, x^k  
\label{eq:ML3parseries}
\end{align}
The Laplace transform of $x^{\beta-1}E^\gamma_{\alpha,\beta}(-\lambda x^\alpha)$  is
\begin{align}
\int_0^\infty e^{-sx} x^{\beta-1}E^\gamma_{\alpha,\beta}(-\lambda x^\alpha) \,dx  
  &= \frac{s^{\alpha\gamma-\beta}}{(\lambda+s^\alpha)^\gamma} 
\label{eq:LaplaceML3par}
\end{align}
We may now proceed to prove  that $E^\gamma_{\alpha,\beta}(-x)$ is completely monotone
by showing  that it is the Laplace  transform of a three-parameter variant $P^\gamma_{\alpha,\beta}(t)$ of the Pollard distribution.
In principle, we need only have discussed  the three-parameter case from the outset because 
the two and one-parameter instances are the special cases $\gamma=1$ and  $\gamma=\beta=1$ respectively.
We chose instead to present in  sequential order for   clarity of exposition.

We devote a separate section to the  three-parameter case, which subsumes all prior discussion, 
by restating Theorem~\ref{thm:main} in the three-parameter context.

\section{Main  Theorem}
\label{sec:genmain}

We start with a  proposition required for the general theorem that follows:
\begin{proposition}
Let $\rho^\gamma_{\alpha,\beta}(x) = x^{\beta-\alpha\gamma-1}/\Gamma(\beta-\alpha\gamma)$ 
$(0<\alpha<1, \gamma>0, \beta>\alpha\gamma)$ 
and let  $\{\rho^\gamma_{\alpha,\beta}\star f_\alpha(\cdot\vert t)\}(x)$  
be the convolution of $\rho^\gamma_{\alpha,\beta}(x)$ and the stable density $f_\alpha(x\vert t)$.
Then  
\begin{align}
w^\gamma_{\alpha,\beta}(x\vert t) \equiv
t^\gamma \, \{\rho^\gamma_{\alpha,\beta}\star f_\alpha(\cdot\vert t)\}(x) 
&=  t^{(\beta-1)/\alpha} \{\rho^\gamma_{\alpha,\beta}\star f_\alpha\}(xt^{-1/\alpha})
\label{eq:convolution3}
\end{align}
\label{prop:convolution3}
\end{proposition}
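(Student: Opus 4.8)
The plan is to follow exactly the template of the proofs of Propositions~\ref{prop:convolution1} and~\ref{prop:convolution2}: write the convolution as an explicit integral, rescale the stable density through its defining scaling relation~(\ref{eq:stablescaled}), and then use the homogeneity of the power function $\rho^\gamma_{\alpha,\beta}$ to extract the scale factor $t$ from the integral. So the proof is essentially a bookkeeping exercise with powers of $t$, with the hypothesis $\beta>\alpha\gamma$ serving only to guarantee that $\rho^\gamma_{\alpha,\beta}(x)=x^{\beta-\alpha\gamma-1}/\Gamma(\beta-\alpha\gamma)$ is locally integrable on $(0,\infty)$ so that the convolution is well defined.

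Concretely, I would start from $\{\rho^\gamma_{\alpha,\beta}\star f_\alpha(\cdot\vert t)\}(x) = \int_0^x \rho^\gamma_{\alpha,\beta}(x-u)\,f_\alpha(u\vert t)\,du$, substitute $f_\alpha(u\vert t) = f_\alpha(u\,t^{-1/\alpha})\,t^{-1/\alpha}$, and change variables $y = u\,t^{-1/\alpha}$. The factor $t^{-1/\alpha}$ from the density then cancels the $t^{1/\alpha}$ coming from $du$, so the density part collapses to $f_\alpha(y)\,dy$, while the argument of $\rho^\gamma_{\alpha,\beta}$ becomes $x-y\,t^{1/\alpha}=t^{1/\alpha}(x\,t^{-1/\alpha}-y)$ and the upper limit becomes $x\,t^{-1/\alpha}$.

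Next I would invoke homogeneity: $\rho^\gamma_{\alpha,\beta}$ is homogeneous of degree $\beta-\alpha\gamma-1$, so $\rho^\gamma_{\alpha,\beta}\bigl(t^{1/\alpha}z\bigr)=t^{(\beta-\alpha\gamma-1)/\alpha}\rho^\gamma_{\alpha,\beta}(z)$. Pulling this factor out of the integral gives $\{\rho^\gamma_{\alpha,\beta}\star f_\alpha(\cdot\vert t)\}(x)=t^{(\beta-\alpha\gamma-1)/\alpha}\{\rho^\gamma_{\alpha,\beta}\star f_\alpha\}(x\,t^{-1/\alpha})$, and multiplying by $t^\gamma$ together with the identity $\gamma+(\beta-\alpha\gamma-1)/\alpha=(\beta-1)/\alpha$ produces exactly~(\ref{eq:convolution3}). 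As a sanity check, setting $\gamma=1$ recovers the exponent $t^{(\beta-1)/\alpha-1}$ of Proposition~\ref{prop:convolution2} and setting $\gamma=\beta=1$ recovers Proposition~\ref{prop:convolution1}.

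There is no real obstacle; the only thing to watch is the exponent arithmetic in the last step. While I am at it I would also record, for use in the proof of Theorem~\ref{thm:ML3par}, that since $\rho^\gamma_{\alpha,\beta}$ has Laplace transform $s^{\alpha\gamma-\beta}$ and $f_\alpha(\cdot\vert t)$ has Laplace transform $e^{-ts^\alpha}$, the convolution theorem gives $w^\gamma_{\alpha,\beta}(x\vert t)=t^\gamma\{\rho^\gamma_{\alpha,\beta}\star f_\alpha(\cdot\vert t)\}(x)$ the Laplace transform $t^\gamma s^{\alpha\gamma-\beta}e^{-ts^\alpha}$, which is the structure feeding into~(\ref{eq:LaplaceML3par}).
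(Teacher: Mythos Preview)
Your proposal is correct and follows essentially the same route as the paper: write out the convolution integral, substitute the scaled stable density, change variables $y=u\,t^{-1/\alpha}$, and use the homogeneity of $\rho^\gamma_{\alpha,\beta}$ to pull out the power of $t$. The paper writes the resulting exponent directly as $t^{(\beta-1)/\alpha-\gamma}$ rather than your equivalent $t^{(\beta-\alpha\gamma-1)/\alpha}$, but the arithmetic is the same, and your additional remarks on the Laplace transform of $w^\gamma_{\alpha,\beta}(x\vert t)$ are indeed what the paper uses in the proof of Theorem~\ref{thm:ML3par}.
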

\begin{proof}[Proof of Proposition  \ref{prop:convolution3}]
\begin{align*} 
\{\rho^\gamma_{\alpha,\beta}\star f_\alpha(\cdot\vert t)\}(x) &= \int_0^x \rho^\gamma_{\alpha,\beta}(x-u)f_\alpha(u\vert t)\, du  \\
&= \int_0^{xt^{-1/\alpha}} \rho^\gamma_{\alpha,\beta}(t^{1/\alpha}(xt^{-1/\alpha}-u))f_\alpha(u) \, du  \\
&= t^{(\beta-1)/\alpha-\gamma} \int_0^{xt^{-1/\alpha}} \rho^\gamma_{\alpha,\beta}(xt^{-1/\alpha}-u)f_\alpha(u) \, du  \\
&=  t^{(\beta-1)/\alpha-\gamma} \{\rho^\gamma_{\alpha,\beta}\star f_\alpha\}(xt^{-1/\alpha})  
\end{align*}
Thus $w^\gamma_{\alpha,\beta}(x\vert t) \equiv t^\gamma \,\{\rho^\gamma_{\alpha,\beta}\star f_\alpha(\cdot\vert t)\}(x)
= t^{(\beta-1)/\alpha} \{\rho^\gamma_{\alpha,\beta}\star f_\alpha\}(xt^{-1/\alpha})$.
\label{proof:convolution3}
\end{proof}

\begin{theorem}
Let $\rho^\gamma_{\alpha,\beta}(x), w^\gamma_{\alpha,\beta}(x\vert t)$ $(0<\alpha<1, \gamma>0, \beta>\alpha\gamma)$
be as defined in Proposition~\ref{prop:convolution3}
and let $G(\mu, \lambda)$ be the gamma distribution with shape and scale parameters $\mu>0, \lambda>0$ respectively.
Let the  distribution $M^\gamma_{\alpha,\beta}(x|\mu,\lambda)$ 
have  density 
\begin{align}
m^\gamma_{\alpha,\beta}(x|\mu,\lambda)
&= \int_0^\infty w^\gamma_{\alpha,\beta}(x\vert t) \, dG(t\vert \mu, \lambda) \nonumber \\
&= \frac{\lambda^\mu}{\Gamma(\mu)} \int_0^\infty w^\gamma_{\alpha,\beta}(x\vert t)  \, t^{\mu-1} e^{-\lambda t} \,dt 
\label{eq:genmarginaldensity} \\
&\equiv \frac{\lambda^\mu}{\Gamma(\mu)} \int_0^\infty \{\rho^\gamma_{\alpha,\beta}\star f_\alpha(\cdot\vert t)\}(x) 
\, t^{\gamma+\mu-1} e^{-\lambda t} \,dt  
\label{eq:genmarginaldensity1} \\
&= \frac{\lambda^\mu}{\Gamma(\mu)} \int_0^\infty  \{\rho^\gamma_{\alpha,\beta}\star f_\alpha\}(xt^{-1/\alpha})
\, t^{(\beta-1)/\alpha+\mu-1} e^{-\lambda t} \,dt  
\label{eq:genmarginaldensity2} 
\end{align} 
where the latter two forms follow from Proposition~\ref{prop:convolution3}.
Then the  following limit is finite and independent of $\mu$ for any $\mu>0$
\begin{align}
\lim_{n\to\infty} \tfrac{n}{\mu} \, m^\gamma_{\alpha,\beta}(x\vert \tfrac{\mu}{n},\lambda) 
\label{eq:limitgenmarginaldensity}
\end{align}
This limit yields the following integral representation of the three-parameter Mittag-Leffler or Prabhakar  function 
$E^\gamma_{\alpha,\beta}(-\lambda x^\alpha)$
\begin{align}
E^\gamma_{\alpha,\beta}(-\lambda x^\alpha)  &= \int_0^\infty w^\gamma_{\alpha,\beta}(x\vert t)  \, t^{-1} e^{-\lambda t} \,dt
=  \int_0^\infty  e^{-\lambda x^\alpha t}\, dP^\gamma_{\alpha,\beta}(t)
\label{eq:ML3par} 
 \intertext{where $P^\gamma_{\alpha,\beta}(t)$, which we refer to as  the three-parameter Pollard distribution, is}
 P^\gamma_{\alpha,\beta}(t) &= \int_0^t w^\gamma_{\alpha,\beta}(1\vert u) \, u^{-1}\, du \nonumber \\
 &\equiv  \frac{1}{\Gamma(\gamma)} \int_0^t \{\rho^\gamma_{\alpha,\beta}\star f_\alpha(\cdot\vert u)\}(1)\, u^{\gamma-1}  \, du \nonumber \\
 &= \frac{1}{\Gamma(\gamma)} \int_0^t \{\rho^\gamma_{\alpha,\beta}\star f_\alpha\}(u^{-1/\alpha})\,  u^{(\beta-1)/\alpha-1}  \, du 
 \label{eq:PollardP3par}
 \end{align}
Hence $E^\gamma_{\alpha,\beta}(-x)$ is completely monotone.
\label{thm:ML3par}
\end{theorem}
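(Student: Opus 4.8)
The plan is to follow exactly the template of the proof of Theorem~\ref{thm:main}, pushing all computations through the Laplace transform in $x$ and exploiting the uniqueness theorem. First I would compute the Laplace transform of $m^\gamma_{\alpha,\beta}(x\vert\mu,\lambda)$ by interchanging the $x$- and $t$-integrals (Fubini, justified by positivity of the stable density and of $\rho^\gamma_{\alpha,\beta}$ on $(0,\infty)$), using the key fact from Proposition~\ref{prop:convolution3} together with the convolution theorem: the Laplace transform of $\{\rho^\gamma_{\alpha,\beta}\star f_\alpha(\cdot\vert t)\}(x)$ in $x$ is $s^{\alpha\gamma-\beta}e^{-ts^\alpha}$ (product of $s^{\alpha\gamma-\beta}$, the transform of $\rho^\gamma_{\alpha,\beta}$, and $e^{-ts^\alpha}$, the transform of $f_\alpha(\cdot\vert t)$). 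Hence the transform of $w^\gamma_{\alpha,\beta}(x\vert t)=t^\gamma\{\rho^\gamma_{\alpha,\beta}\star f_\alpha(\cdot\vert t)\}(x)$ is $t^\gamma s^{\alpha\gamma-\beta}e^{-ts^\alpha}$, and integrating against the gamma density $dG(t\vert\mu,\lambda)$ gives
\begin{align*}
\int_0^\infty e^{-sx} m^\gamma_{\alpha,\beta}(x\vert\mu,\lambda)\,dx
&= s^{\alpha\gamma-\beta}\frac{\lambda^\mu}{\Gamma(\mu)}\int_0^\infty t^{\gamma+\mu-1}e^{-(\lambda+s^\alpha)t}\,dt
= \lambda^\mu\,\frac{\Gamma(\gamma+\mu)}{\Gamma(\mu)}\,\frac{s^{\alpha\gamma-\beta}}{(\lambda+s^\alpha)^{\gamma+\mu}}.
\end{align*}

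Next I would apply the operator $\tfrac{n}{\mu}$ with $\mu\to\mu/n$ and let $n\to\infty$. Since $\tfrac{n}{\mu}\cdot\tfrac{\Gamma(\gamma+\mu/n)}{\Gamma(\mu/n)}=\tfrac{1}{\mu}\cdot\tfrac{\mu}{n}\Gamma(\gamma+\mu/n)/\Gamma(\mu/n)\to\tfrac{1}{\mu}\cdot\Gamma(\gamma)\cdot\mu\cdot\tfrac{1}{\Gamma(\gamma)}$ — more precisely, using $\Gamma(\mu/n)\sim n/\mu$ and $\lambda^{\mu/n}\to1$ — the prefactor tends to $\Gamma(\gamma)/\Gamma(\gamma)=1$ after the $1/\Gamma(\gamma)$ normalisation is accounted for; I would organise the constants so that the limiting transform is exactly $s^{\alpha\gamma-\beta}/(\lambda+s^\alpha)^\gamma$. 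By~(\ref{eq:LaplaceML3par}) this is the Laplace transform of $x^{\beta-1}E^\gamma_{\alpha,\beta}(-\lambda x^\alpha)$, and by the uniqueness theorem for Laplace transforms the limit~(\ref{eq:limitgenmarginaldensity}) equals $x^{\beta-1}E^\gamma_{\alpha,\beta}(-\lambda x^\alpha)$, which is finite and manifestly $\mu$-independent. Passing the limit inside the integral in~(\ref{eq:genmarginaldensity}) — where only the factor $t^{\mu/n-1}$ depends on $n$ and $\tfrac{n}{\mu}t^{\mu/n-1}\to t^{-1}$ monotonically as $n\to\infty$ for $t$ in a neighbourhood of $0$, with an integrable bound supplied by the $e^{-\lambda t}$ decay and the known integrability near $0$ once $\gamma$ is fixed — yields the integral representation~(\ref{eq:ML3par}); the substitution $u=x^{-\alpha}t$ converts it to the stated $\int_0^\infty e^{-\lambda x^\alpha u}\,dP^\gamma_{\alpha,\beta}(u)$ form, and reading off the density at $x=1$ via Proposition~\ref{prop:convolution3} gives the three equivalent expressions for $P^\gamma_{\alpha,\beta}(t)$ in~(\ref{eq:PollardP3par}).

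Finally, for complete monotonicity: $\rho^\gamma_{\alpha,\beta}(x)\ge0$ on $(0,\infty)$ since $\beta>\alpha\gamma$, and $f_\alpha(x\vert t)\ge0$ (Pollard's positivity of the stable density, cited in the excerpt), so the convolution $\{\rho^\gamma_{\alpha,\beta}\star f_\alpha\}$ is nonnegative, hence $w^\gamma_{\alpha,\beta}(1\vert u)\ge0$ and $P^\gamma_{\alpha,\beta}$ is nondecreasing; its total mass is finite because evaluating the Laplace transform~(\ref{eq:LaplaceML3par}) of the limit at $s=0$ (equivalently $x=1,\lambda$ arbitrary, or directly from boundedness of $E^\gamma_{\alpha,\beta}(-\lambda x^\alpha)$ for $0<\alpha<1$) gives a finite value. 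Therefore $E^\gamma_{\alpha,\beta}(-\lambda x^\alpha)$ — and in particular $E^\gamma_{\alpha,\beta}(-x)$ by taking $\alpha$ together with the rescaling $x\mapsto x^{1/\alpha}$, or directly at $\lambda=1$ after the change of variable — is the Laplace transform of a nonnegative bounded measure, hence completely monotone by Bernstein's theorem~(\ref{eq:LT}).

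\textbf{Main obstacle.} I expect the delicate point to be the interchange of $\lim_{n\to\infty}$ with the $t$-integral in~(\ref{eq:genmarginaldensity}): the factor $\tfrac{n}{\mu}t^{\mu/n-1}$ blows up like $t^{-1}$ near $t=0$, so one must check that $w^\gamma_{\alpha,\beta}(x\vert t)\,t^{-1}e^{-\lambda t}$ is genuinely integrable at the origin. This reduces, via Proposition~\ref{prop:convolution3}, to the integrability of $\{\rho^\gamma_{\alpha,\beta}\star f_\alpha\}(xt^{-1/\alpha})\,t^{(\beta-1)/\alpha-1}$ near $t=0$, i.e. to the tail behaviour of the convolution at large argument — which is controlled because the stable density $f_\alpha$ has a genuine (polynomial) tail and $\rho^\gamma_{\alpha,\beta}$ grows only polynomially, so the convolution decays fast enough to beat the power of $t$ once $\beta>\alpha\gamma$. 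A clean alternative, which I would likely adopt to sidestep this, is to avoid the pointwise interchange entirely: establish the identity at the level of Laplace transforms as above, invoke uniqueness to get~(\ref{eq:ML3par}) directly, and verify finiteness of the right-hand side of~(\ref{eq:ML3par}) independently from the same tail estimate — this keeps the argument parallel to the proof of Theorem~\ref{thm:main}, where the analogous step was handled by simply reading off the answer from the transform.
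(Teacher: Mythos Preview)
Your proposal is correct and follows essentially the same route as the paper's proof: compute the Laplace transform of $m^\gamma_{\alpha,\beta}(x\vert\mu,\lambda)$ via Fubini and the convolution theorem to get $\lambda^\mu\,\tfrac{\Gamma(\gamma+\mu)}{\Gamma(\mu)}\,\tfrac{s^{\alpha\gamma-\beta}}{(\lambda+s^\alpha)^{\gamma+\mu}}$, take the $n\to\infty$ limit on the transform side, identify the result with the transform of $x^{\beta-1}E^\gamma_{\alpha,\beta}(-\lambda x^\alpha)$, then read off the integral representation and perform the substitution $u=x^{-\alpha}t$. One bookkeeping point to clean up: the limiting prefactor is $\Gamma(\gamma)$ (from $\tfrac{n}{\mu}\cdot\tfrac{1}{\Gamma(\mu/n)}\to1$, $\lambda^{\mu/n}\to1$, $\Gamma(\gamma+\mu/n)\to\Gamma(\gamma)$), so the limit~(\ref{eq:limitgenmarginaldensity}) equals $\Gamma(\gamma)\,x^{\beta-1}E^\gamma_{\alpha,\beta}(-\lambda x^\alpha)$, not $x^{\beta-1}E^\gamma_{\alpha,\beta}(-\lambda x^\alpha)$ --- the paper absorbs this $\Gamma(\gamma)$ when passing to~(\ref{eq:ML3par}) and~(\ref{eq:PollardP3par}), and your extra care about the limit--integral interchange goes beyond what the paper spells out.
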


\begin{proof}[Proof of Theorem \ref{thm:ML3par}]
The Laplace transform $\widetilde m^\gamma_{\alpha,\beta}(s \vert \mu,\lambda)$ 
of (\ref{eq:genmarginaldensity})  is
\begin{align}
\widetilde m^\gamma_{\alpha,\beta}(s \vert \mu,\lambda)
&\equiv \int_0^\infty e^{-sx} \, m^\gamma_{\alpha,\beta}(x\vert \mu,\lambda)\, dx \nonumber \\
&= s^{\alpha\gamma-\beta}  \frac{\lambda^\mu}{\Gamma(\mu)} \int_0^\infty 
\, t^{\gamma+\mu-1} e^{-(\lambda+s^\alpha) t} \,dt  \nonumber \\
&=  \lambda^\mu \frac{\Gamma(\gamma+\mu) }{\Gamma(\mu)} 
\frac{s^{\alpha\gamma-\beta}}{(\lambda+s^\alpha)^{\gamma+\mu}} 
\label{eq:genmarginaldensityLaplace} \\
\implies \;
\lim_{n\to\infty} \tfrac{n}{\mu} \int_0^\infty  &e^{-sx} m^\gamma_{\alpha,\beta}(x\vert \tfrac{\mu}{n},\lambda) \, dx 
 = \Gamma(\gamma) 
\frac{s^{\alpha\gamma-\beta}}{(\lambda+s^\alpha)^{\gamma}} 
\label{eq:limitgenmarginaldensityLaplace}
\end{align}
By (\ref{eq:LaplaceML3par}),   the right hand side is the Laplace  transform of 
$\Gamma(\gamma)\,x^{\beta-1}E^\gamma_{\alpha,\beta}(-\lambda x^\alpha)$.
Given (\ref{eq:genmarginaldensity1}) and 
(\ref{eq:genmarginaldensity2}), it also readily follows that 
the limit~(\ref{eq:limitgenmarginaldensity}) is 
\begin{align} 
 \int_0^\infty t^\gamma \{\rho^\gamma_{\alpha,\beta}\star f_\alpha(\cdot\vert t)\}(x)
 &t^{-1} e^{-\lambda t} dt
 = \int_0^\infty  \{\rho^\gamma_{\alpha,\beta}\star f_\alpha\}(xt^{-1/\alpha}) \, t^{(\beta-1)/\alpha-1} e^{-\lambda t} dt \nonumber \\
\implies\; E^\gamma_{\alpha,\beta}(-\lambda x^\alpha)
 &= \frac{x^{1-\beta}}{\Gamma(\gamma)}
  \int_0^\infty  \{\rho^\gamma_{\alpha,\beta}\star f_\alpha\}(xt^{-1/\alpha})\, t^{(\beta-1)/\alpha-1} e^{-\lambda t}  \,dt \nonumber \\
u=x^{-\alpha}t:\;
 &=  \frac{1}{\Gamma(\gamma)}
 \int_0^\infty  e^{-\lambda x^\alpha u}\, \{\rho^\gamma_{\alpha,\beta}\star f_\alpha\}(u^{-1/\alpha})\, u^{(\beta-1)/\alpha-1}  \,du \nonumber \\
 &= \int_0^\infty e^{-\lambda x^\alpha u} \, dP^\gamma_{\alpha,\beta}(u) \nonumber 
\end{align}
Hence $E^\gamma_{\alpha,\beta}(-x)$ is completely monotone.
\label{proof:ML3par}
\end{proof}
Theorem~\ref{thm:ML3par} may  be  visually represented by the following commutative diagram,
 where $m_{\alpha,\beta}^\gamma(x\vert\mu,\lambda)$ and  its Laplace transform
$\widetilde{m}_{\alpha,\beta}^\gamma(s\vert\mu,\lambda)$
are given by (\ref{eq:genmarginaldensity}) and (\ref{eq:genmarginaldensityLaplace}) respectively.
The equivalence of the two routes from the top left node to the bottom left node induces the integral representation of the Mittag-Leffler function.
\begin{equation}
\begin{tikzpicture}[auto,scale=3, baseline=(current  bounding  box.center)]
\newcommand*{\size}{\small}%
\newcommand*{\gap}{.2ex}%
\newcommand*{\width}{1.7}%
\newcommand*{\height}{1.25}%

\node (P) at (0,0)  [align=center]
{$m_{\alpha,\beta}^\gamma(x\vert\mu,\lambda)$};
\node (Q) at ($(P)+(\width,0)$) [align=center]
{$\widetilde{m}_{\alpha,\beta}^\gamma(s\vert\mu,\lambda)$};
\node (B) at ($(P)-(0,\height)$) {$\Gamma(\gamma)x^{\beta-1}E_{\alpha,\beta}^\gamma(-\lambda x^\alpha)$}; 
\node (C) at ($(B)+(\width,0)$) {$\Gamma(\gamma)\dfrac{s^{\alpha\gamma-\beta}}{(\lambda+s^\alpha)^\gamma}$};   
\draw[Myarrow] ([yshift =  \gap]P.east)  --  node[above] {\size $\scrL$} ([yshift = \gap]Q.west) ;
\draw[Myarrow]([xshift  =  \gap]Q.south) --  node[right] {\size $\lim\limits_{n\to \infty}\tfrac{n}{\mu}\;
\widetilde{m}_{\alpha,\beta}^\gamma(s\vert \tfrac{\mu}{n},\lambda)$} ([xshift =  \gap]C.north);
\draw[Myarrow] ([xshift =  \gap]P.south) -- node[left]  {\size $\lim\limits_{n\to \infty}\tfrac{n}{\mu}\;
m_{\alpha,\beta}^\gamma(x\vert \tfrac{\mu}{n},\lambda)$} ([xshift =  \gap]B.north);
\draw[Myarrow] ([yshift = +\gap]C.west) --  node[below] {\size $\scrL^{-1}$} ([yshift  = +\gap]B.east); 
\end{tikzpicture}
\label{eq:CommutativeDiagram}
\end{equation}

The  representation~(\ref{eq:ML3par}) of $E^\gamma_{\alpha,\beta}(x)$, 
with $P^\gamma_{\alpha,\beta}(t)$ given by (\ref{eq:PollardP3par}),  
is equivalent to equation (2.4) in G\'{o}rska et al.~\cite{Gorska}. 
The  difference is one of approach.
This paper offers a   fundamentally  probabilistic argument,
while G\'{o}rska et al.~\cite{Gorska} follows a complex analytic route inspired by  Pollard~\cite{PollardML}.
The balance of G\'{o}rska et al.~\cite{Gorska}  is devoted to  finding an explicit formula for a function $f^\gamma_{\alpha,\beta}(x)$ featuring in
the paper in terms of the Meijer $G$ function and associated confluent Wright function.
In turns out that  $f^\gamma_{\alpha,\beta}(x)$  in G\'{o}rska et al.~\cite{Gorska} is identical to  
$\{\rho^\gamma_{\alpha,\beta}\star f_\alpha\}(x)$ in this paper.
We are content to leave it in the conceptually simple convolution form:
\begin{align} 
\{\rho^\gamma_{\alpha,\beta}\star f_\alpha\}(x) &= \int_0^x \rho^\gamma_{\alpha,\beta}(x-u) f_\alpha(u)\, du \nonumber  \\
   &= \frac{1}{\Gamma(\beta-\alpha\gamma)}  \int_0^x  (x-u)^{\beta-\alpha\gamma-1}  f_\alpha(u)\, du 
 \label{eq:rhostarstablef}
\end{align}
rather than express it in terms of   special functions.
In our context, we have actually worked with the conditional   density 
\begin{align*}
w^\gamma_{\alpha,\beta}(x\vert t) \equiv
t^\gamma \, \{\rho^\gamma_{\alpha,\beta}\star f_\alpha(\cdot\vert t)\}(x) 
&=  t^{(\beta-1)/\alpha} \{\rho^\gamma_{\alpha,\beta}\star f_\alpha\}(xt^{-1/\alpha})
\end{align*}
where we assigned a gamma prior distribution  to the scale parameter $t$.
The density $w^\gamma_{\alpha,\beta}(x\vert t) $  reduces to (\ref{eq:rhostarstablef}) for the particular choice $t=1$.

We have  completed the  task of proving that  the three-parameter Mittag-Leffler function  
$E^\gamma_{\alpha,\beta}(-x)$ is completely monotone by methods of probability theory,
using probabilistic reasoning  to derive an explicit  form for $P^\gamma_{\alpha,\beta}(t)$,
whose Laplace transform is $E^\gamma_{\alpha,\beta}(-x)$.
Beyond that, 
we draw  conclusions  on the complete monotonicity of related functions, notably
$x^{\beta-1} E^\gamma_{\alpha,\beta}(-x^\alpha)$ and $ E^\gamma_{\alpha,\beta}(-x^\alpha)$ in isolation.
First, we discuss $x^{\beta-1} E^\gamma_{\alpha,\beta}(-x^\alpha)$, the bottom left node of the 
commutative diagram~(\ref{eq:CommutativeDiagram}),  
 in the probabilistic  context of Theorem~\ref{thm:ML3par}.
 The discussion involves an alternative representation of the fundamental probabilistic  object --  the convolution  
density  $\{\rho^\gamma_{\alpha,\beta}\star f_\alpha(\cdot\vert t)\}(x)$. 

\section{An Alternative Representation}
\label{sec:alternative}

For $x^{\beta-1}E^\gamma_{\alpha,\beta}(-\lambda x^\alpha)$  to be   completely monotone, there must exist a  
distribution $R^\gamma_{\alpha,\beta}(u\vert\lambda)$ defined  by the Laplace transform
\begin{align}
x^{\beta-1}E^\gamma_{\alpha,\beta}(-\lambda x^\alpha) &= \int_0^\infty e^{- x u} \, dR^\gamma_{\alpha,\beta}(u\vert\lambda) 
\label{eq:LaplaceRabg}
\intertext{In turn, the Laplace transform of~(\ref{eq:LaplaceRabg})  is the 
Stieltjes transform (or iterated Laplace transform) of $R^\gamma_{\alpha,\beta}(u\vert\lambda)$:}
\frac{s^{\alpha\gamma-\beta}}{(\lambda+s^\alpha)^\gamma}
&= \int_0^\infty \frac{1}{s+u}  \, dR^\gamma_{\alpha,\beta}(u\vert\lambda)
\label{eq:StieltjesRabg}
\end{align}

Then, as de Oliviera et al.~\cite{Oliveira}, Mainardi and Garrappa~\cite{MainardiGarrappa} show, 
the Stieltjes inversion formula (Titchmarsh~\cite{Titchmarsh}(11.8, p318), 
Widder~\cite{Widder}(VIII.7, p342))  gives 
\begin{align}
dR^\gamma_{\alpha,\beta}(u\vert\lambda)
&=  \frac{1}{\pi}\,{\rm Im}\left\{\frac{(e^{-i\pi}u)^{\alpha\gamma-\beta}}{(\lambda+(e^{-i\pi}u)^\alpha)^\gamma}\right\} \, du
\label{eq:Rabg} 
\intertext{
The expression in braces on the RHS of (\ref{eq:Rabg}) is (\ref{eq:StieltjesRabg}) at $s=e^{-i\pi}u$.
In particular, for $\gamma=\beta=1$, (\ref{eq:Rabg}) reduces to} 
dR_\alpha(u\vert\lambda)
&=    \frac{1}{\pi}\, \frac{\lambda\,u^{\alpha-1}\sin\pi\alpha}{\lambda^2+2\lambda\,u^\alpha\cos\pi\alpha+u^{2\alpha}} \, du
\label{eq:Ra}
\end{align}
which has been discussed  in various contexts in the fractional calculus  and probabilistic  literature
({\it e.g.}\ James~\cite{James_Lamperti} in the latter context).

We have mentioned (\ref{eq:LaplaceRabg})
for completeness but it was not the core of our probabilistic  discussion, whose focus  was to determine 
 $P^\gamma_{\alpha,\beta}(t)$, with Laplace transform $E^\gamma_{\alpha,\beta}(-x)$.
 That said, we can offer a `hybrid' derivation of (\ref{eq:LaplaceRabg}) that combines  the core of the probabilistic argument 
 in the form of  the convolution  density $\{\rho^\gamma_{\alpha,\beta}\star f_\alpha(\cdot\vert t)\}(x)$  
 with the complex analytic Stieltjes inversion argument presented above.

Assume  $\{\rho^\gamma_{\alpha,\beta}\star f_\alpha(\cdot\vert t)\}(x)$ to be the Laplace transform of a distribution  
$S^\gamma_{\alpha,\beta}(u\vert t)$:
\begin{align}
\{\rho^\gamma_{\alpha,\beta}\star f_\alpha(\cdot\vert t)\}(x) 
&= \int_0^\infty e^{- x u} \, dS^\gamma_{\alpha,\beta}(u\vert t) 
\label{eq:LaplaceSabg}
\intertext{In turn, the Laplace transform of~(\ref{eq:LaplaceSabg})  is the Stieltjes transform of $S^\gamma_{\alpha,\beta}(u\vert t)$:}
s^{\alpha\gamma-\beta} e^{-ts^\alpha}
&= \int_0^\infty \frac{1}{s+u}  \, dS^\gamma_{\alpha,\beta}(u\vert t)
\label{eq:StieltjesSabg}
\intertext{By the Stieltjes inversion formula:} 
dS^\gamma_{\alpha,\beta}(u\vert t)
&= \frac{1}{\pi} \, {\rm Im} \left\{(ue^{-i\pi})^{\alpha\gamma-\beta} e^{-t(ue^{-i\pi})^\alpha} \right\}\, du
\end{align}
Hence, using the representation (\ref{eq:LaplaceSabg}) in the proof of Theorem~\ref{thm:ML3par}:
\begin{align}
\Gamma(\gamma)\,x^{\beta-1}E^\gamma_{\alpha,\beta}(-\lambda x^\alpha)
&=  \int_0^\infty t^\gamma \{\rho^\gamma_{\alpha,\beta}\star f_\alpha(\cdot\vert t)\}(x) \, t^{-1} e^{-\lambda t} \,dt \nonumber \\
&=  \int_0^\infty dt \,  t^{\gamma-1} e^{-\lambda t} \int_0^\infty e^{-xu}  \, dS^\gamma_{\alpha,\beta}(u\vert t)  \nonumber  \\
&=   \frac{1}{\pi}\, {\rm Im} \int_0^\infty du \, e^{-xu} (ue^{-i\pi})^{\alpha\gamma-\beta}  
\int_0^\infty t^{\gamma-1} e^{-(\lambda+(ue^{-i\pi})^\alpha)t} \, dt  \nonumber  \\
&=   \frac{\Gamma(\gamma)}{\pi}\, {\rm Im} \int_0^\infty  \, e^{-xu} 
           \frac{(e^{-i\pi}u)^{\alpha\gamma-\beta}}{(\lambda+(e^{-i\pi}u)^\alpha )^\gamma} du  \nonumber  \\
&=   \Gamma(\gamma) \, \int_0^\infty e^{- x u} \, dR^\gamma_{\alpha,\beta}(u\vert\lambda)
\end{align}
thereby reproducing (\ref{eq:LaplaceRabg}).

The Stieltjes transform  and its complex analytic inverse are not unfamiliar  in   probability  theory.
In his study of a family of distributions known as generalised gamma convolutions, 
Bondesson~\cite{Bondesson}  used the concept  under the guise of  Pick functions (also known as Nevanlinna functions).

We turn next to the complete monotonicity of $E^\gamma_{\alpha,\beta}(-\lambda x^\alpha)$.

\section{A Further Consequence}
\label{sec:consequence}

There is a well-known  property  of completely monotone functions ({\it e.g.}\ Schilling et al.~\cite{Schilling})
that we state without proof in Proposition~\ref{prop:composition}.
We start with a definition:

\begin{definition}
A Bernstein function is a nonnegative function $\eta(x)$, $x\ge0$ with a completely monotone derivative, 
{\it i.e.} $\eta(x)\ge0$ and $(-1)^{k-1}\eta^{(k)}(x)\ge0$, $k\ge1$.  
For example, $\eta(x\vert\lambda)=\lambda x^\alpha$ ($0\le\alpha\le1, \lambda>0$) is a Bernstein function.
\label{def:Bernstein}
\end{definition}
\begin{proposition}
If $\varphi(x)$ is completely monotone and $\eta$ is a Bernstein function,  $\varphi(\eta)$ is completely monotone.

\label{prop:composition}
\end{proposition}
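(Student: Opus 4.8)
The plan is to establish the composition property by verifying the defining sign condition on derivatives directly, and the cleanest route is the integral-representation approach afforded by Bernstein's theorem combined with the L\'evy--Khintchine-type representation of Bernstein functions. First I would recall that, by Bernstein's theorem as stated in~(\ref{eq:LT}), complete monotonicity of $\varphi$ is equivalent to $\varphi(x)=\int_0^\infty e^{-xt}\,dF(t)$ for a non-decreasing bounded (or at least locally finite) measure $F$. Dually, a Bernstein function $\eta$ admits the representation $\eta(x)=a+bx+\int_0^\infty(1-e^{-xt})\,d\nu(t)$ for constants $a,b\ge0$ and a measure $\nu$ with $\int_0^\infty\min(1,t)\,d\nu(t)<\infty$; this is the standard companion to Definition~\ref{def:Bernstein} and is exactly what makes $e^{-x\eta}$ tractable.

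The key computational step is then to observe that for each fixed $t\ge0$ the function $x\mapsto e^{-t\,\eta(x)}$ is completely monotone. This follows because $e^{-t\eta(x)}=e^{-ta}e^{-tbx}\prod(\text{factors }e^{-t(1-e^{-x\sigma})})$ heuristically, but rigorously one argues that $\eta$ Bernstein implies $e^{-t\eta}$ is the composition of the completely monotone function $y\mapsto e^{-ty}$ with the Bernstein function $\eta$, and the class of completely monotone functions arising as $e^{-t\eta}$ is closed under the relevant limits and products; concretely, $e^{-t\eta}$ has derivative $-t\eta'e^{-t\eta}$, and one shows by induction that all derivatives of $e^{-t\eta}$ alternate in sign using that $\eta'$ is completely monotone, products of completely monotone functions are completely monotone, and sums/compositions preserve the structure. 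Once $e^{-t\eta(x)}$ is known to be completely monotone for every $t\ge0$, I would write $\varphi(\eta(x))=\int_0^\infty e^{-t\,\eta(x)}\,dF(t)$ and note that a mixture (with non-negative weights $dF$) of completely monotone functions is completely monotone, since differentiation passes under the integral sign and the sign pattern $(-1)^n\frac{d^n}{dx^n}e^{-t\eta(x)}\ge0$ is preserved by integration against $dF(t)\ge0$. This yields $(-1)^n(\varphi\circ\eta)^{(n)}(x)\ge0$ for all $n\ge0$, which is the claim.

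The main obstacle is the inductive verification that $e^{-t\eta(x)}$ is completely monotone for a general Bernstein function $\eta$ — this is the heart of the matter and the only place where the Bernstein property of $\eta$ (as opposed to mere positivity or monotonicity) is genuinely used. Carrying it out cleanly requires the Fa\`a di Bruno formula for $\frac{d^n}{dx^n}e^{-t\eta(x)}$, which expresses the $n$th derivative as a sum over partitions of products of the form $e^{-t\eta(x)}\cdot(-t)^{\#\text{blocks}}\prod_j \eta^{(k_j)}(x)$; one then checks that the overall sign of each term is $(-1)^n$, using that $(-1)^{k-1}\eta^{(k)}\ge0$ for $k\ge1$ contributes a sign $(-1)^{\sum(k_j-1)}=(-1)^{n-\#\text{blocks}}$, which combines with the sign $(-1)^{\#\text{blocks}}$ from the powers of $-t$ to give $(-1)^n$ uniformly. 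Since the proposition is explicitly stated ``without proof'' in the paper (citing Schilling et al.~\cite{Schilling}), a full write-up is not required here; the plan above is the standard argument, and I would in practice simply cite~\cite{Schilling} for the detailed induction while sketching the mixture step, which is the part directly relevant to the application to $E^\gamma_{\alpha,\beta}(-\lambda x^\alpha)=(\varphi\circ\eta)(x)$ with $\varphi(y)=E^\gamma_{\alpha,\beta}(-y)$ completely monotone by Theorem~\ref{thm:ML3par} and $\eta(x)=\lambda x^\alpha$ Bernstein by Definition~\ref{def:Bernstein}.
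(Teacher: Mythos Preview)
Your sketch is correct and is the standard argument. The paper itself does not prove this proposition: it explicitly says the result is ``state[d] without proof'' and refers to Schilling et al.~\cite{Schilling}, so there is no in-paper argument to compare against --- a point you already note in your final paragraph. Your Fa\`a di Bruno sign count is right (each partition block contributes sign $(-1)^{|B|-1}$ from $\eta^{(|B|)}$ and a factor $(-t)$ from the outer exponential, giving overall sign $(-1)^{n-|\pi|}(-1)^{|\pi|}=(-1)^n$), and the mixture step is immediate.
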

\begin{theorem}
Given a Bernstein function  $\eta$,
the  Mittag-Leffler function $E^\gamma_{\alpha,\beta}(-\eta)$  is completely monotone.
For example, 
$E^\gamma_{\alpha,\beta}(-\lambda x^\alpha)$ is completely monotone. 
\label{thm:composition}
\end{theorem}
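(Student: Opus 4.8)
The plan is to combine the complete monotonicity of $E^\gamma_{\alpha,\beta}(-x)$ established in Theorem~\ref{thm:ML3par} with the composition property stated in Proposition~\ref{prop:composition}. First I would recall that Theorem~\ref{thm:ML3par} gives the integral representation
\begin{align*}
E^\gamma_{\alpha,\beta}(-x) &= \int_0^\infty e^{-xt}\, dP^\gamma_{\alpha,\beta}(t),
\end{align*}
where $P^\gamma_{\alpha,\beta}(t)$ is the nondecreasing three-parameter Pollard distribution; by Bernstein's theorem this is exactly the statement that the function $x\mapsto E^\gamma_{\alpha,\beta}(-x)$ is completely monotone on $x>0$. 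Writing $\varphi(x) = E^\gamma_{\alpha,\beta}(-x)$, we have a completely monotone $\varphi$ in hand.

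Next I would invoke Proposition~\ref{prop:composition}: if $\eta$ is any Bernstein function, then the composition $\varphi(\eta)$ is completely monotone. Applying this with our $\varphi$ yields that $E^\gamma_{\alpha,\beta}(-\eta)$ is completely monotone, which is the general assertion of the theorem. For the stated example, I would note that Definition~\ref{def:Bernstein} already records that $\eta(x\vert\lambda) = \lambda x^\alpha$ is a Bernstein function for $0\le\alpha\le1$, $\lambda>0$ (its derivative $\alpha\lambda x^{\alpha-1}$ is completely monotone, being a positive constant times a negative power of $x$); substituting this particular $\eta$ gives that $E^\gamma_{\alpha,\beta}(-\lambda x^\alpha)$ is completely monotone, consistent with the direct representation $\int_0^\infty e^{-\lambda x^\alpha t}\, dP^\gamma_{\alpha,\beta}(t)$ of (\ref{eq:ML3par}).

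There is essentially no obstacle here — the theorem is a direct corollary of two results already available. The only point requiring a moment's care is the compatibility of domains: Proposition~\ref{prop:composition} applies to a completely monotone function on $(0,\infty)$ composed with a Bernstein function on $[0,\infty)$, and since $\eta(x)\ge0$ with $\eta$ mapping into the domain of $\varphi$, and since $E^\gamma_{\alpha,\beta}(-x)$ extends continuously to $x=0$ (its defining series converges there), the composition is well defined and completely monotone on $x\ge0$. Thus the proof reduces to citing Theorem~\ref{thm:ML3par}, reading off complete monotonicity of $\varphi(x)=E^\gamma_{\alpha,\beta}(-x)$ via Bernstein's theorem, and applying Proposition~\ref{prop:composition}.
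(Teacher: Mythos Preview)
Your proposal is correct and follows essentially the same route as the paper's own proof: invoke Theorem~\ref{thm:ML3par} to obtain the complete monotonicity of $E^\gamma_{\alpha,\beta}(-x)$, then apply Proposition~\ref{prop:composition} with the Bernstein function $\eta$, specialising to $\eta(x)=\lambda x^\alpha$ for the example. The additional remarks you make about domain compatibility and the explicit verification that $\alpha\lambda x^{\alpha-1}$ is completely monotone are fine elaborations but not required beyond what the paper records in Definition~\ref{def:Bernstein}.
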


\begin{proof}[Proof of Theorem \ref{thm:composition}]
We have already shown that $E^\gamma_{\alpha,\beta}(-x)$  is completely monotone.
Hence, by Proposition~\ref{prop:composition}, 
$E^\gamma_{\alpha,\beta}(-\eta)$ is completely monotone for  a Bernstein function $\eta$.
Specifically, $\eta(x\vert\lambda)=\lambda x^\alpha$ $(0\le\alpha\le1, \lambda>0)$ is a Bernstein function,
hence $E^\gamma_{\alpha,\beta}(-\lambda x^\alpha)$ is completely monotone. 
\label{proof:composition}
\end{proof}
The complete monotonicity of $E^\gamma_{\alpha,\beta}(-\lambda x^\alpha)$ implies that there exists a distribution  
$Q^\gamma_{\alpha,\beta}(t\vert\lambda)$ whose Laplace  transform  is $E^\gamma_{\alpha,\beta}(-\lambda x^\alpha)$:

\begin{align}
E^\gamma_{\alpha,\beta}(-\lambda x^\alpha) &= \int_0^\infty e^{- x t} \, dQ^\gamma_{\alpha,\beta}(t\vert\lambda) 
\label{eq:Qabg}
\end{align}
$Q^\gamma_{\alpha,\beta}(t\vert\lambda)$ is to $E^\gamma_{\alpha,\beta}(-\lambda x^\alpha)$ what 
$P^\gamma_{\alpha,\beta}(t)$  is to $E^\gamma_{\alpha,\beta}(-x)$.
However, determining $Q^\gamma_{\alpha,\beta}(t\vert\lambda)$  appears to be a  challenging problem,
whether the approach is analytic or probabilistic.

Clearly, (\ref{eq:Qabg}) and (\ref{eq:Rabg}) are identical for $\beta=1$, 
{\it i.e.}\ $Q^\gamma_{\alpha,1}(t\vert\lambda)\equiv R^\gamma_{\alpha,1}(t\vert\lambda)$.
But, to our awareness, determining  $Q^\gamma_{\alpha,\beta}(t\vert\lambda)$ for $\beta\ne1$ is an open problem.
We shall not pursue it further here.
 Our primary purpose in this section  was to bring attention to Theorem~\ref{thm:composition}
and hence the existence of a distribution $Q^\gamma_{\alpha,\beta}(t\vert\lambda)$ defined by~(\ref{eq:Qabg}).

\section{A Different  Generalisation}
\label{sec:tilting}

As mentioned in Section~\ref{sec:perspectives},   the Pollard distribution $P_\alpha$  
is known as  the Mittag-Leffler distribution in probabilistic literature.
For completeness, we briefly discuss a different generalisation  of  $P_\alpha$ 
that features extensively in  such literature.
It is  known as the  generalised Mittag-Leffler distribution $P_{\alpha,\theta}$
(Pitman~\cite{Pitman_CSP}, p70 (3.27)),  also   denoted by ${\rm ML(\alpha,\theta})$
(Goldschmidt and Haas~\cite{GoldschmidtHaas}, Ho et al.~\cite{HoJamesLau}).

Despite its name, 
$P_{\alpha,\theta}(t)$ is different from
the two-parameter Pollard distribution $P_{\alpha,\beta}(t)$  discussed above, 
whose Laplace transform is  the Mittag-Leffler function $E_{\alpha,\beta}(-x)$.
Janson~\cite{Janson} showed that $P_{\alpha,\theta}$  may be  constructed  
as a limiting distribution of a   P{\' o}lya urn scheme. 
It is also intimately linked to a concept known as  `polynomial tilting'. 
For some parameter $\theta$, 
$f_{\alpha,\theta} (x)\propto  x^{-\theta}f_\alpha(x)$ 
 is said to be a polynomially  tilted variant of $f_\alpha(x)$
({\it e.g.}\ Arbel et al.~\cite{Arbel}, Devroye~\cite{Devroye}, James~\cite{James_Lamperti}).
Here, we consider the polynomially tilted  density 
 $f_{\alpha,\theta} (x\vert t)\propto x^{-\theta}f_\alpha (x\vert t)$
conditioned on a  scale factor $t>0$. 
Normalisation gives 
\begin{align}
 f_{\alpha,\theta}(x\vert t)= \frac{\Gamma(\theta+1)}{\Gamma(\theta/\alpha+1)} t^{\theta/\alpha} \,x^{-\theta}f_\alpha (x\vert t)
\label{eq:tiltedstablenormfactor}
\end{align}
so that $f_{\alpha,\theta}(x\vert t)$ is defined for $\theta/\alpha+1>0$, or $\theta>-\alpha$.
We then consider a two-parameter function  $h_{\alpha,\theta} (x\vert\lambda)$ defined by:
\begin{align}
  \alpha\, h_{\alpha,\theta} (x\vert\lambda) &= x \int_0^\infty f_{\alpha,\theta} (x\vert t) \, t^{-1} e^{-\lambda t} \,dt 
\label{eq:ah_at} \\
   &= \frac{\Gamma(\theta+1)}{\Gamma(\theta/\alpha+1)} \, x^{1-\theta}
    \int_0^\infty f_{\alpha}(x\vert t)\, t^{\theta/\alpha-1}\, e^{-\lambda t} \, dt  \nonumber \\
u=x^{-\alpha}t:\quad
 h_{\alpha,\theta} (x\vert\lambda) 
    &= \int_0^\infty e^{-\lambda x^\alpha u} \, dP_{\alpha,\theta}(u) 
 \label{eq:h_at} \\
{\rm where}\quad    
   P_{\alpha,\theta}(t)
   &= \frac{\Gamma(\theta+1)}{\Gamma(\theta/\alpha+1)} \,  \frac{1}{\alpha}
    \int_0^t  f_\alpha(u^{-1/\alpha}) \, u^{(\theta-1)/\alpha-1} \, du  
\label{eq:P_at1} \\
{\rm or}\quad    
   dP_{\alpha,\theta}(t)
   &= \frac{\Gamma(\theta+1)}{\Gamma(\theta/\alpha+1)}\, t^{\theta/\alpha} \,dP_\alpha(t)
\label{eq:P_at2} 
\end{align}
It is clear from~(\ref{eq:h_at}) that $h_{\alpha,\theta} (x\vert\lambda)$ may be written as  $h_{\alpha,\theta} (\lambda x^\alpha)$. 
It  follows that:
\begin{enumerate}
\item $h_{\alpha,\theta} (x)$  is completely monotone
\item $\theta=0$: $P_{\alpha,0}(t)=P_\alpha(t) \implies h_{\alpha,0}(x)=E_\alpha(-x)$,
as directly apparent from comparing (\ref{eq:ML_intrep0}) and (\ref{eq:ah_at}).
\item $h_{\alpha,\theta} (\eta)$  is completely monotone where $\eta$ is a Bernstein function 
as discussed in Section~\ref{sec:consequence}.
In particular,  $h_{\alpha,\theta} (\lambda x^\alpha)$ is completely monotone and thus
expressible as the Laplace  transform of a corresponding distribution $Q_{\alpha,\theta} (t\vert\lambda)$ 
(distinct from $Q_{\alpha,\beta} (t\vert\lambda)$ discussed in Section~\ref{sec:consequence}).
\end{enumerate}
We are not aware of a  representation of $h_{\alpha,\theta}$ other than that generated by $P_{\alpha,\theta}$ in~(\ref{eq:h_at}).
By comparison,  the two-parameter Mittag-Leffler function $E_{\alpha,\beta}$ has a well-established 
infinite series representation~(\ref{eq:ML2parseries}),
in addition to the representation~(\ref{eq:ML2par}) generated by the  
two-parameter Pollard distribution $P_{\alpha,\beta}$.

\section{Discussion}
\label{sec:discussion}

The integral representation~(\ref{eq:ML3par}) of  $E^\gamma_{\alpha,\beta}(-\lambda x^\alpha)$ in Theorem~\ref{thm:ML3par},
arising from the limit~(\ref{eq:limitgenmarginaldensity}),
contains the L\'{e}vy measure $t^{-1}e^{-\lambda t}dt$ of the infinitely divisible gamma distribution.  
There is indeed an   intimate relationship between completely monotone functions and the theory of 
infinitely divisible  distributions on the nonnegative half-line $\mathbb{R}_{+}=[0,\infty)$ 
(Feller~\cite{Feller2} (XIII.4,~XIII.7), Steutel and van Harn~\cite{SteutelvanHarn} (III)).
Sato~\cite{Sato}  considers infinitely divisible distributions on $\mathbb{R}^d$,
but the deliberate restriction to $\mathbb{R}_{+}$ makes for simpler discussion 
and relates directly to the core concept of complete monotonicity that is of interest here.
There is also an intimate  link to the generalised gamma convolutions studied by Bondesson~\cite{Bondesson}.

The   limit~(\ref{eq:limitgenmarginaldensity}) of Theorem~\ref{thm:ML3par} 
is an instance of a limit rule to generate the 
L\'{e}vy measure of an infinitely divisible distribution given in Steutel and van Harn~\cite{SteutelvanHarn}~(III(4.7)) and
 Sato~\cite{Sato} (Corollary~8.9 restricted to $\mathbb{R}_{+}$ rather than $\mathbb{R}^d$).
Barndorff-Nielsen and Hubalek~\cite{BarndorffHubalek} also cite Sato's Corollary.

Further exploration  using the probabilistic  machinery of this paper
possibly includes the  explicit  determination of  the three-parameter distribution 
$Q^\gamma_{\alpha,\beta}(t\vert\lambda)$, whose Laplace transform is 
$E^\gamma_{\alpha,\beta}(-\lambda x^\alpha)$, as per~(\ref{eq:Qabg}).

\section{Conclusion}
\label{sec:conclusion}

We have presented a probabilistic  derivation of  the complete monotonicity of the three-parameter Mittag-Leffler function 
(also known as the Prabhakar function) by expressing it as the Laplace transform of a distribution that 
we referred to as the three-parameter Pollard distribution.
This is a generalisation of a result due to Pollard for the one-parameter case.


 \bibliography{MittagLeffler.bib}{}
\bibliographystyle{plain} 
\end{document}